\newtheorem{thm}{Theorem}[section]
\newtheorem{lem}[thm]{Lemma}
\theoremstyle{definition}
\newtheorem{defn}[thm]{Definition}
\theoremstyle{remark}
\numberwithin{equation}{section}
\newtheorem{ass}{Assumption}
\newcommand{\ep}{\epsilon}
\begin{document}
\title[]
{A tight bound on the stepsize of the Decentralized gradient descent 
} 

 \author{Woocheol Choi}
\address{Department of Mathematics, Sungkyunkwan University, Suwon 440-746, Republic of Korea}
\email{choiwc@skku.edu}

\subjclass[2010]{Primary 65K10, 90C26 }

\keywords{Distributed optimization, Gradient descent, Sharp range}

\maketitle

\begin{abstract} In this paper, we consider the decentralized gradinet descent (DGD) given by
\begin{equation*}
x_i (t+1) = \sum_{j=1}^m w_{ij} x_j (t) - \alpha (t) \nabla f_i (x_i (t)).
\end{equation*}
We find a sharp range of the stepsize $\alpha (t)>0$ such that the sequence $\{x_i (t)\}$ is uniformly bounded when the aggregate cost $f$ is assumed be strongly convex with smooth local costs which might be non-convex. Precisely, we find a tight bound $\alpha_0 >0$ such that the states of the DGD algorithm is unfiromly bounded for non-increasing sequence $\alpha (t)$ satisfying $\alpha (0) \leq \alpha_0$. The theoretical results are also verified by numerical experiments.
\end{abstract}

\section{Introduction}

In this work, we consider the distributed optimization 
 \begin{equation}\label{prob}
 \min_{x}~f(x):= \frac{1}{m}\sum_{k=1}^{m} f_k (x),
 \end{equation}
 where $m$ denotes the number of agents and $f_k : \mathbb{R}^n \rightarrow \mathbb{R}$ is a differentiable local cost only known to agent $k$ for each $1 \leq k \leq m$.  The decetralized gradient descent is given as
 \begin{equation}\label{eq-1-1}
 x_{k}(t+1) = \sum_{j=1}^m w_{kj} x_j (t) - \alpha (t) \nabla f_k (x_k (t)).
 \end{equation}
 Here $\alpha (t) >0$ is a stepsize and $x_k (t)$ denotes the variable of agent $k$ at time instant $t \geq  0$. 
 The communication pattern among agents in \eqref{prob} is determined by an undirected graph $\mathcal{G}=(\mathcal{V},\mathcal{E})$, where each node in $\mathcal{V}$ represents each agent, and each edge $\{i,j\} \in \mathcal{E}$ means $i$ can send messages to $j$ and vice versa. The value $w_{ij}$ is a nonnegative weight value such that $w_{ij} >0$ if and only if $\{i,j\} \in \mathcal{E}$ and the matrix $W =\{w_{ij}\}_{1 \leq i,j\leq m}$ is doubly stochastic. 

This algorithm has recieved a lot of attentions from researchers in various fields. In particular, the algorithm has been a pivotal role in the development of several methods, containing online distributed gradient descent method \cite{YXL, CB}, the stochastic decentralized gradient descent \cite{POP}, and multi-agent Reinforcement Learning \cite{DMR, ZADR}. It was also extended to nested communication-local computation algorithms \cite{BBKW, CKY}.
 
For the fast convergence of the algorithm \eqref{eq-1-1}, it is important to choose a suitable sequence of the stepsize $\alpha(t)$.  It is often advantageous to choose a possibly large stepsize as the convergence may become faster as the stepsize gets larger in a stable regime.

 When it comes to the cases $m=1$, the algorithm is reduced to the gradient descent algorithm given as
\begin{equation}\label{eq-1-0}
x(t+1) = x(t) - \alpha (t) \nabla f(x(t)),
\end{equation}
and we recall a well-known convergence result in the following theorem.
\begin{thm}\label{thm-1-0}
Assume that $f$ is $\mu$-strongly convex and $L$-smooth. Suppose that the stepsize of \eqref{eq-1-0} satisfies $\alpha (t) \leq \frac{2}{\mu +L}$. Then the sequence $\{x(t)\}_{t \geq 0}$ is bounded. Moreover,
\begin{equation*}
\|x(t+1) - x_*\|^2 \leq  \Big( 1- \frac{2L\mu \alpha(t)}{L+\mu}\Big)  \|x(t) -x_*\|^2.
\end{equation*}
 \end{thm}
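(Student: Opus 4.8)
The plan is to prove the contraction inequality first and then derive boundedness as an immediate consequence. The key algebraic identity to exploit is the expansion
\[
\|x(t+1) - x_*\|^2 = \|x(t) - x_*\|^2 - 2\alpha(t)\langle \nabla f(x(t)), x(t) - x_* \rangle + \alpha(t)^2 \|\nabla f(x(t))\|^2,
\]
which follows by substituting the update rule \eqref{eq-1-0} and using $\nabla f(x_*) = 0$ at the minimizer. Everything then reduces to bounding the middle inner product from below in a way that absorbs the final quadratic term.

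\emph{The central tool} is the standard coercivity estimate for a function that is simultaneously $\mu$-strongly convex and $L$-smooth. I would invoke (or quickly reprove) the inequality
\[
\langle \nabla f(x) - \nabla f(y), x - y \rangle \geq \frac{\mu L}{\mu + L}\|x-y\|^2 + \frac{1}{\mu+L}\|\nabla f(x) - \nabla f(y)\|^2,
\]
applied with $x = x(t)$ and $y = x_*$. This is the crucial coupling inequality; without it one only gets the weaker separate bounds from strong convexity and smoothness, which do not combine to give the sharp constant $\tfrac{2}{\mu+L}$. Substituting this lower bound into the expansion above yields
\[
\|x(t+1) - x_*\|^2 \leq \Big(1 - \frac{2\mu L \alpha(t)}{\mu+L}\Big)\|x(t)-x_*\|^2 + \alpha(t)\Big(\alpha(t) - \frac{2}{\mu+L}\Big)\|\nabla f(x(t))\|^2.
\]

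\emph{The step requiring care} is handling the residual gradient term. Under the hypothesis $\alpha(t) \leq \tfrac{2}{\mu+L}$, the factor $\alpha(t) - \tfrac{2}{\mu+L}$ is nonpositive, so the entire term $\alpha(t)\big(\alpha(t) - \tfrac{2}{\mu+L}\big)\|\nabla f(x(t))\|^2$ is $\leq 0$ and can simply be dropped. This is exactly where the stepsize bound earns its role: it is the precise threshold that makes the unwanted quadratic contribution have the favorable sign. Discarding it immediately gives the claimed contraction
\[
\|x(t+1)-x_*\|^2 \leq \Big(1 - \frac{2L\mu\alpha(t)}{L+\mu}\Big)\|x(t)-x_*\|^2.
\]

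Finally, boundedness follows at once: since the contraction factor satisfies $0 \leq 1 - \tfrac{2L\mu\alpha(t)}{L+\mu} \leq 1$ (the lower bound again using $\alpha(t)\leq \tfrac{2}{\mu+L}$ together with $\mu \leq L$), the sequence $\|x(t)-x_*\|$ is non-increasing, whence $\|x(t)-x_*\| \leq \|x(0)-x_*\|$ for all $t$, and the sequence $\{x(t)\}$ is bounded. I expect the only genuine obstacle to be justifying the coupling inequality cleanly; the rest is a direct substitution and a sign check.
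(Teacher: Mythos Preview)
Your proposal is correct and is precisely the standard argument for this classical result. Note, however, that the paper does not supply its own proof of Theorem~\ref{thm-1-0}: it is stated as a well-known convergence result recalled for context (see, e.g., the reference \cite{Bu} in the bibliography), so there is no in-paper proof to compare against. Your derivation via the coercivity inequality
\[
\langle \nabla f(x) - \nabla f(y), x-y\rangle \ge \frac{\mu L}{\mu+L}\|x-y\|^2 + \frac{1}{\mu+L}\|\nabla f(x)-\nabla f(y)\|^2
\]
is exactly the textbook route and all steps---the expansion, the sign check on the residual term under $\alpha(t)\le \tfrac{2}{\mu+L}$, and the nonnegativity of the contraction factor via $4\mu L\le (\mu+L)^2$---are sound.
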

 
 Although the convergence property of the algorithm \eqref{eq-1-1} has been studied extensively (\cite{NO, NO2, RNV, YLY, CK}), the sharp range of $\alpha (t)$ for the convergence of the algorithm \eqref{eq-1-1} has not been completely understood, even when each cost $f_k$ is a  quadratic form.

In the early stage, the convergene of the algorithm \eqref{eq-1-1} was studied with assuming that $\|\nabla f_i \|_{\infty} <\infty$ and each function $f_i$ is convex for $1 \leq i \leq m$. Nedi{\'c}-Ozdaglar \cite{NO} showed that for the algorithm \eqref{eq-1-1} with the stepsize $\alpha(t) \equiv \alpha$, the cost value $f(\cdot)$ at an average of the iterations converges to an $O(\alpha)$-neighborhood of an optimal value of $f$.  Nedi{\'c}-Ozdaglar \cite{NO2} proved that the algorithm \eqref{eq-1-1} converges to an optimal point if the stepsize satisfies $\sum_{t=1}^{\infty}\alpha (t) = \infty$ and $\sum_{t=1}^{\infty}\alpha (t)^2 <\infty$. 
In the work of Chen \cite{I}, the algorithm \eqref{eq-1-1} with stepsize $\alpha (t) = c/t^p$ with $0<p<1$ was considered and the convergene rate was achieved as $O(1/t^p)$ for $0<p<1/2$, $O(\log t/\sqrt{t})$ for $p=1/2$, and $O(1/t^{1-p})$ for $1/2<p<1$. 

Recently, Yuan-Ling-Yin \cite{YLY} established the convergence property of the algorithm \eqref{eq-1-1} without the gradient bound assumption. Assuming that each local cost function is convex and the total cost is strongly convex, they showed that the algorithm \eqref{eq-1-1} with constant stepsize $\alpha (t) \equiv \alpha$ converges exponentially to an $O(\alpha)$-neighborhood of an optimizer $x_*$ of \eqref{prob}.  Recently, the work \cite{CK} obtained the convergence property of the algorithm \eqref{eq-1-1} for a general class of non-increasing stepsize $\{\alpha (t)\}_{t \in \mathbb{N}_0}$ given as $\alpha (t) = a/(t+w)^p$ for $a>0$, $w \geq 1$ and $0< p \leq 1$ assuming the strong convexity on the total cost function $f$, with cost functions $f_i$ not necessarily being convex.

To discuss the convergence property of \eqref{eq-1-1}, it is convenient to state the following definition.
\begin{defn} The sequence $\{x_k (t)\}$ of \eqref{eq-1-1} is said to be uniformly bounded if there exists a value $R>0$ such that
\begin{equation*}
\|x_i (t)\| \leq R 
\end{equation*}
for all $t \geq 0$ and $1 \leq i \leq m$.
 \end{defn}
We state the following convergence results  established in the works \cite{YLY, CK}.
\begin{thm}[\cite{YLY, CK}]
Assume that  each cost $f_k$ is $L$-smooth and the aggregate cost $f$ is $\mu$-strongly convex. Suppose also that the sequence $\{x_k (t)\}_{t \geq 0}$ of \eqref{eq-1-1} is uniformly bounded   and $\alpha (0) \leq \frac{2}{\mu +L}$. Then we have the following results:
\begin{enumerate}
\item If $\alpha (t) \equiv \alpha$, then the sequence $x_k (t)$ converges exponentially to an $O\Big(\frac{\alpha}{1-\beta}\Big)$ neighborhood of $x_*$. 
\item If $\alpha (t) = \frac{a}{(t+w)^{p}}$ for some $a >0, w\geq 0$ and $p \in (0,1]$, then  the sequence $x_k (t)$ converges to $x_*$ with the following rate
\begin{equation*}
\|x_i (t) -x_*\| = O \Big( \frac{1}{t^p}\Big).
\end{equation*}
\end{enumerate}
\end{thm}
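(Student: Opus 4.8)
The plan is to decouple the dynamics into the motion of the average state $\bar{x}(t) = \frac{1}{m}\sum_{i=1}^m x_i(t)$ and the consensus (disagreement) errors $x_i(t)-\bar{x}(t)$, to control each by a scalar recursion, and to recombine them through the triangle inequality $\|x_i(t)-x_*\| \leq \|x_i(t)-\bar{x}(t)\| + \|\bar{x}(t)-x_*\|$. Two preliminary facts drive the whole argument. First, since $f=\frac{1}{m}\sum_k f_k$ is an average of $L$-smooth functions it is itself $L$-smooth, so (being also $\mu$-strongly convex) Theorem \ref{thm-1-0} applies directly to $f$. Second, the uniform boundedness hypothesis $\|x_i(t)\|\leq R$ together with $L$-smoothness forces a uniform gradient bound: $\|\nabla f_i(x_i(t))\| \leq \|\nabla f_i(x_*)\| + L\|x_i(t)-x_*\| \leq G$ for some constant $G$ and all $i,t$.

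First I would average the update using that $W$ is doubly stochastic, $\mathbf{1}^T W = \mathbf{1}^T$, which gives
$$\bar{x}(t+1) = \bar{x}(t) - \frac{\alpha(t)}{m}\sum_{i=1}^m \nabla f_i(x_i(t)).$$
I would then recognize this as a perturbed gradient step for $f$,
$$\bar{x}(t+1) = \big(\bar{x}(t) - \alpha(t)\nabla f(\bar{x}(t))\big) - \alpha(t)\, r(t), \qquad r(t) = \frac{1}{m}\sum_{i=1}^m\big(\nabla f_i(x_i(t)) - \nabla f_i(\bar{x}(t))\big),$$
and bound the perturbation by $L$-smoothness as $\|r(t)\| \leq \frac{L}{m}\sum_i \|x_i(t)-\bar{x}(t)\| \leq L\, e(t)$, where $e(t):=\max_i\|x_i(t)-\bar{x}(t)\|$. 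Applying the contraction of Theorem \ref{thm-1-0} to the bracketed exact gradient step (legitimate because $\alpha(t)\leq\alpha(0)\leq\frac{2}{\mu+L}$) and using $\sqrt{1-x}\leq 1-x/2$ yields the scalar optimization recursion
$$\|\bar{x}(t+1) - x_*\| \leq \Big(1 - \frac{L\mu}{L+\mu}\alpha(t)\Big)\|\bar{x}(t) - x_*\| + \alpha(t)\, L\, e(t).$$

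Next I would control $e(t)$: subtracting the averaged update from the per-agent update and invoking the spectral gap of $W$ (on the subspace orthogonal to $\mathbf{1}$ one has $\|Wy\|\leq\beta\|y\|$ with $\beta\in(0,1)$) produces, via the gradient bound $G$, a consensus recursion $e(t+1) \leq \beta\, e(t) + \alpha(t)\, G'$. For the constant stepsize case (1), unrolling this gives $e(t)\leq \beta^t e(0) + \frac{\alpha G'}{1-\beta}$, so $e(t)$ decays geometrically to an $O(\frac{\alpha}{1-\beta})$ neighborhood; feeding this into the optimization recursion, whose contraction factor is a fixed constant $1-\frac{L\mu}{L+\mu}\alpha<1$, and summing the geometric series shows $\|\bar{x}(t)-x_*\|$ also converges geometrically to an $O(\frac{\alpha}{1-\beta})$ neighborhood, because the extra factor $\alpha$ in the forcing term cancels against the $O(1/\alpha)$ arising from $1/(1-\rho)$. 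The triangle inequality then gives claim (1). For the diminishing stepsize case (2), since $\alpha(t)=a/(t+w)^p$ is non-increasing with $\alpha(t)/\alpha(t+1)\to 1$, a standard discrete lemma converts the consensus recursion into $e(t)=O(\alpha(t))=O(t^{-p})$; the forcing term in the optimization recursion is then $\alpha(t)Le(t)=O(\alpha(t)^2)=O(t^{-2p})$, and a Chung-type lemma applied to $\|\bar{x}(t+1)-x_*\|\leq(1-c\alpha(t))\|\bar{x}(t)-x_*\|+O(t^{-2p})$ with $c=\frac{L\mu}{L+\mu}$ yields $\|\bar{x}(t)-x_*\|=O(t^{-p})$, whence $\|x_i(t)-x_*\|=O(t^{-p})$.

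The main obstacle is the coupled diminishing-stepsize analysis in case (2): one must propagate the $O(t^{-p})$ rate through the two interlocking recursions without degrading it. This hinges on selecting the correct discrete Gronwall/Chung lemma and on the observation that the consensus forcing enters the optimization recursion only at the higher order $O(t^{-2p})$, not $O(t^{-p})$, so that it does not dominate the intrinsic $O(t^{-p})$ decay. Tracking the precise constants and re-verifying the admissibility $\alpha(t)\leq\frac{2}{\mu+L}$ at each invocation of Theorem \ref{thm-1-0} is routine but must be done carefully.
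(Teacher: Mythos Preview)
The paper does not give its own proof of this statement: it is quoted verbatim as a result of \cite{YLY} and \cite{CK} and is used only as background to motivate Theorems~\ref{thm-3-3} and~\ref{lem-1-2}. So there is nothing in the paper to compare your argument against. That said, your outline is essentially the standard proof from those references: split into the averaged iterate $\bar{x}(t)$ and the disagreement $\mathbf{x}(t)-\mathbf{1}\otimes\bar{x}(t)$, use the doubly-stochastic property to see $\bar{x}(t)$ as an inexact gradient step on $f$, contract the disagreement by the spectral gap, and close with coupled scalar recursions (geometric for constant $\alpha$, Chung-type for $\alpha(t)=a/(t+w)^p$).

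Two small technical points you should tighten. First, the spectral-gap contraction $\|Wy\|\le\beta\|y\|$ holds for the stacked $\ell_2$ norm on the consensus subspace, not componentwise, so work with $\|\mathbf{x}(t)-\mathbf{1}\otimes\bar{x}(t)\|$ rather than $e(t)=\max_i\|x_i(t)-\bar{x}(t)\|$; the max does not contract under $W$ in general. Second, the bound $\|Wy\|\le\beta\|y\|$ on $y\perp\mathbf{1}$ requires $\beta$ to be the largest \emph{absolute} value among the non-unit eigenvalues of $W$, i.e.\ $\beta=\max\{|\lambda_2(W)|,|\lambda_m(W)|\}$, not merely the second-largest eigenvalue; make sure your usage is consistent with this. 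Neither point affects the structure of the argument, and once fixed your sketch reproduces the proofs in \cite{YLY,CK}.
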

We remark that a uniform bound assumption for the sequence $\{x_k (t)\}_{t \geq 0}$ is required in the above result, which is contrast to the result of Theorem \ref{thm-1-0}. In fact, the boundedness property of \eqref{eq-1-1} has been obtained under an additional restriction on the stepsize as in the following results.
\begin{thm}[\cite{YLY}]\label{thm-1-2}
  Assume that $f_j$ is convex and $L$-smooth. Suppose that the stepsize is constant $\alpha (t) =\alpha$ and $\alpha \leq \frac{(1+\lambda_m (W))}{L}$, then the sequence $\{x(t)\}$ is uniformly bounded. Here $\lambda_m (W)$ denotes the smallest eigenvalue of the matrix $W$.
\end{thm}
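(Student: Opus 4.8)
The plan is to recognize that the DGD recursion \eqref{eq-1-1} with constant stepsize $\alpha(t)\equiv\alpha$ is itself a plain gradient descent iteration for an auxiliary objective, and then to invoke the standard boundedness mechanism for gradient descent. Concretely, writing $\mathbf{x}(t) = (x_1(t),\dots,x_m(t))$ for the stacked state and $F(\mathbf{x}) = \sum_{k=1}^m f_k(x_k)$, I would introduce
\begin{equation*}
H(\mathbf{x}) = \frac{1}{2\alpha}\inp{\mathbf{x}}{(I-W)\mathbf{x}} + F(\mathbf{x}),
\end{equation*}
with $W$ acting blockwise, and observe that $\nabla H(\mathbf{x}) = \frac{1}{\alpha}(I-W)\mathbf{x} + \nabla F(\mathbf{x})$, so the gradient step $\mathbf{x}(t+1) = \mathbf{x}(t) - \alpha\nabla H(\mathbf{x}(t))$ reproduces \eqref{eq-1-1} exactly. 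The analysis then reduces to controlling gradient descent on $H$ with stepsize $\alpha$.

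The next step is to pin down the convexity and smoothness of $H$. Since $W$ is symmetric and doubly stochastic, its eigenvalues lie in $[\lambda_m(W),1]$, so $I-W$ is positive semidefinite with largest eigenvalue $1-\lambda_m(W)$; together with the convexity of each $f_k$ this makes $H$ convex. Moreover, $L$-smoothness of each $f_k$ makes $\nabla F$ $L$-Lipschitz, hence $\nabla H$ is Lipschitz with constant $L_H = \frac{1-\lambda_m(W)}{\alpha} + L$. The arithmetic to highlight is that the hypothesis $\alpha \leq \frac{1+\lambda_m(W)}{L}$ is algebraically equivalent to $\alpha \leq \frac{2}{L_H}$; this is precisely why the sharp bound takes the stated form.

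With $H$ convex and $L_H$-smooth and $\alpha \leq 2/L_H$, I would then apply the classical descent estimate: using co-coercivity of $\nabla H$ against a minimizer $\mathbf{x}_*$ of $H$ (so $\nabla H(\mathbf{x}_*)=0$), one gets
\begin{equation*}
\|\mathbf{x}(t+1)-\mathbf{x}_*\|^2 \leq \|\mathbf{x}(t)-\mathbf{x}_*\|^2 - \alpha\Big(\tfrac{2}{L_H}-\alpha\Big)\|\nabla H(\mathbf{x}(t))\|^2 \leq \|\mathbf{x}(t)-\mathbf{x}_*\|^2,
\end{equation*}
so that $\|\mathbf{x}(t)-\mathbf{x}_*\| \leq \|\mathbf{x}(0)-\mathbf{x}_*\|$ for all $t$, which yields the uniform bound $R = \|\mathbf{x}_*\| + \|\mathbf{x}(0)-\mathbf{x}_*\|$ on each $\|x_i(t)\|$.

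The main obstacle is the step I glossed over: the existence of a minimizer $\mathbf{x}_*$ of $H$, on which the whole distance-contraction argument rests. Unlike the single-agent Theorem \ref{thm-1-0}, here only convexity (not strong convexity) of the local costs is assumed, so $H$ need not be coercive a priori. I would establish attainment by decomposing $\mathbf{x}$ into its consensus component $\mathbf{1}\otimes\bar x$ and its orthogonal disagreement component: the quadratic term $\frac{1}{2\alpha}\inp{\mathbf{x}}{(I-W)\mathbf{x}}$ is coercive on the disagreement subspace since $I-W$ is positive definite there, while along the consensus direction $H(\mathbf{1}\otimes y) = \sum_k f_k(y)$ is governed by the aggregate cost, whose minimizer controls that direction. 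Patching these together to conclude that $H$ attains its infimum is the delicate point; once a minimizer is secured, the remaining estimates are routine.
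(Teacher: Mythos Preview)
Your approach is essentially the one the paper uses. Theorem~\ref{thm-1-2} is quoted from \cite{YLY} and not reproved here, but the same mechanism is the core of the paper's Lemma~\ref{lem-3-1}: one rewrites DGD as unit-stepsize gradient descent on $G_\alpha(\mathbf{x})=\alpha\mathbf{F}(\mathbf{x})+\tfrac12\,\mathbf{x}^T(I-W)\mathbf{x}$ (your $H$ is $G_\alpha/\alpha$ up to a harmless constant), computes the smoothness constant $L_G=\alpha L+1-\lambda_m(W)$, observes that $\alpha\le(1+\lambda_m(W))/L$ is exactly $L_G\le 2$, and applies co-coercivity of $\nabla G_\alpha$ to obtain $\|\mathbf{x}(t+1)-\mathbf{x}_*^{\alpha}\|\le\|\mathbf{x}(t)-\mathbf{x}_*^{\alpha}\|$. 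The arithmetic and the key inequality are identical to yours.

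The gap you single out---existence of a minimizer of $H$---is real, and it is exactly where the paper's hypotheses differ from the bare statement of Theorem~\ref{thm-1-2}. In Lemma~\ref{lem-3-1} there is no issue because the standing assumption is that $G_{\alpha_0}$ is strongly convex, whence $G_\alpha$ is strongly convex for all $\alpha\le\alpha_0$ by Lemma~\ref{lem-2-1} and $\mathbf{x}_*^{\alpha}$ exists trivially. In \cite{YLY}, as the paper notes in the paragraph preceding Theorem~\ref{thm-1-2}, the aggregate cost $f$ is in fact assumed strongly convex; that gives coercivity of $H$ along the consensus direction, and combined with the positive definiteness of $I-W$ on the disagreement subspace your decomposition argument then produces a minimizer of $H$ without difficulty. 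Under the hypotheses literally stated in Theorem~\ref{thm-1-2} (only convexity and $L$-smoothness of each $f_j$) the conclusion can fail---take $m=1$ and $f_1$ affine---so some such additional assumption must be imported. Once it is, your proof is complete and matches the paper's line of argument.
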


\begin{thm}[\cite{CK}]\label{thm-1-10} Assume that $f_j$ is $L$-smooth and $f$ is $\mu$-strongly convex. Let $\eta = \frac{\mu L}{\mu +L}$ and suppose that $\alpha (t) \leq \frac{\eta (1-\beta)}{L (\eta +L)}$. Then the sequence $\{x_i (t)\}$ is uniformly bounded.
\end{thm}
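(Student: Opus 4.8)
The plan is to decompose the dynamics into the evolution of the network average and of the consensus (disagreement) error, and then to close a two–dimensional linear recursion whose stability threshold turns out to be exactly $\frac{\eta(1-\beta)}{L(\eta+L)}$. Write $\bar{x}(t)=\frac{1}{m}\sum_{j=1}^m x_j(t)$ and $e_k(t)=x_k(t)-\bar{x}(t)$, let $x_*$ be the minimizer of $f$, and set
\begin{equation*}
a(t)=\|\bar{x}(t)-x_*\|, \qquad b(t)=\Big(\sum_{k=1}^m \|e_k(t)\|^2\Big)^{1/2}.
\end{equation*}
Here $\beta$ denotes the operator norm of $W$ restricted to the subspace orthogonal to $\mathbf{1}$ (for connected $W$ one has $\beta<1$). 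Averaging \eqref{eq-1-1} and using double stochasticity of $W$ (so that $\sum_k w_{kj}=1$) gives $\bar{x}(t+1)=\bar{x}(t)-\frac{\alpha(t)}{m}\sum_k \nabla f_k(x_k(t))$. I would then write $\frac{1}{m}\sum_k \nabla f_k(x_k(t)) = \nabla f(\bar{x}(t)) + \frac{1}{m}\sum_k[\nabla f_k(x_k(t))-\nabla f_k(\bar{x}(t))]$, so the average performs an exact gradient step on $f$ plus a smoothness error. Since $f$ is $\mu$-strongly convex and (being an average of $L$-smooth costs) $L$-smooth, and since one checks $\frac{\eta(1-\beta)}{L(\eta+L)}\le \frac{2}{\mu+L}$, Theorem \ref{thm-1-0} applies to the exact gradient map and supplies the contraction factor $\sqrt{1-2\eta\alpha(t)}\le 1-\eta\alpha(t)$. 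Controlling the smoothness error by $\frac{L}{\sqrt m}\,b(t)$ through Cauchy–Schwarz yields
\begin{equation*}
a(t+1) \le (1-\eta\alpha(t))\,a(t) + \frac{L}{\sqrt m}\,\alpha(t)\,b(t).
\end{equation*}

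For the disagreement I would project \eqref{eq-1-1} onto the mean-zero subspace. Because $W$ is doubly stochastic it commutes with this projection and contracts it by $\beta$, giving $b(t+1)\le \beta\,b(t)+\alpha(t)\,\|\nabla F(\mathbf{x}(t))\|$, where $\nabla F$ stacks the local gradients. Splitting $\nabla f_k(x_k(t))$ through $\nabla f_k(\bar{x}(t))$ and $\nabla f_k(x_*)$ and using $L$-smoothness together with the $\ell^2$ triangle inequality produces
\begin{equation*}
b(t+1) \le (\beta+L\alpha(t))\,b(t) + L\sqrt m\,\alpha(t)\,a(t) + G_*\,\alpha(t),
\end{equation*}
where the constant $G_*=\big(\sum_k\|\nabla f_k(x_*)\|^2\big)^{1/2}$ appears because the local optima need not coincide with $x_*$.

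Finally I would read the two estimates as $z(t+1)\le M(t)z(t)+\alpha(t)d$ with $z=(a,b)^\top$, $d=(0,G_*)^\top$, and
\begin{equation*}
M(t)=\begin{pmatrix} 1-\eta\alpha(t) & \tfrac{L}{\sqrt m}\alpha(t) \\[2pt] L\sqrt m\,\alpha(t) & \beta+L\alpha(t)\end{pmatrix}.
\end{equation*}
Since $\alpha(t)\le\alpha(0)\le\frac{\eta(1-\beta)}{L(\eta+L)}$, the matrix $I-M(t)$ has positive $(1,1)$ entry and determinant $\det(I-M(t)) = \alpha(t)\big[\eta(1-\beta)-L(\eta+L)\alpha(t)\big]\ge 0$, the threshold emerging because the off-diagonal product $\frac{L}{\sqrt m}\cdot L\sqrt m=L^2$ lets the $\sqrt m$ factors cancel and $\eta L+L^2=L(\eta+L)$. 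I would then build a forward-invariant rectangle $[0,A]\times[0,B]$: the first row forces $A\ge \frac{L}{\eta\sqrt m}B$, and inserting this minimal choice into the second row shows invariance holds once $B$ is large enough to absorb the forcing $G_*\alpha(t)$, which is possible precisely when $\det(I-M(t))>0$. An induction on $t$ keeps $(a(t),b(t))$ in the rectangle, and $\|x_k(t)\|\le a(t)+\|e_k(t)\|+\|x_*\|\le A+B+\|x_*\|=:R$ gives the claimed uniform bound.

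The main obstacle I anticipate is closing the coupled recursion with the \emph{sharp} constant: one must track the smoothness constants carefully so that the off-diagonal product is exactly $L^2$ and the stability threshold is $\frac{\eta(1-\beta)}{L(\eta+L)}$ rather than a weaker value. A secondary delicate point is the boundary case $\alpha(0)=\frac{\eta(1-\beta)}{L(\eta+L)}$, where $\det(I-M)=0$ and the homogeneous recursion is only marginally stable, so the invariant-rectangle argument applies cleanly to the strict range $\alpha(0)<\alpha_0$ and the endpoint requires extra care (for instance exploiting that $\alpha(t)$ is non-increasing, or passing to the limit from the strict case).
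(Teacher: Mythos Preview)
The paper does not supply its own proof of this statement: Theorem~\ref{thm-1-10} is quoted from \cite{CK} without argument. However, the paper restates the same result with explicit constants as Theorem~\ref{thm-3-2}, and the shape of the bound $R$ there --- the ratio $L/\eta$ between the average and consensus errors and the denominator $\eta(1-\beta)/L-(\eta+L)\alpha(0)$ in the forcing term --- is exactly what your coupled two-dimensional recursion and invariant-rectangle argument produce. Your proposal therefore reconstructs what is evidently the argument in \cite{CK}; note also that Theorem~\ref{thm-3-2} assumes the \emph{strict} inequality $\alpha(0)<\frac{\eta(1-\beta)}{L(\eta+L)}$, which confirms your remark that the endpoint $\alpha(0)=\frac{\eta(1-\beta)}{L(\eta+L)}$ is delicate.
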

 In the above results, we note the following inequality
\begin{equation*}
 \frac{\eta (1-\beta)}{L (\eta +L)}< \frac{1+\lambda_m (W)}{L}.
\end{equation*}
Therefore, the result of Theorem \ref{thm-1-2} establishes the boundedness property for a broader range of the constant stepsize with assuming the convexity for each local cost. Meanwhile, the result of Theorem \ref{thm-1-10} establishes the boundedness property for time varying stepsize without assuming the convexity on each local cost, but the range of the stepsize is more restrictive. Having said this, it is natural to consider the following questions:

\medskip 

\noindent  \textbf{Question 1:} \emph{Does the result of Theorem \ref{thm-1-2} hold for time-varying stepsize? and can we moderate the convexity assumption on each cost?}

\medskip

\noindent \textbf{Question 2:} \emph{Can we extend the range of $\alpha (t)$ in the result of Theorem \ref{thm-1-10} with an additional information of the cost functions?}

\medskip 

We may figure out the importance of these questions by considering an example: Consider $m=3$ and the functions $f_1, f_2, f_3 : \mathbb{R}^2 \rightarrow \mathbb{R}$ defined as
\begin{equation}\label{eq-1-62}
f_1 (x,y) =f_2 (x,y)= \frac{L}{2} x^2 + \frac{\mu}{2} y^2\quad \textrm{and}\quad f_3 (x,y) = -\frac{\ep}{2} x^2 + \frac{\mu}{2} y^2,
\end{equation}
where $L \gg \mu_0 >0$ and $\ep\geq 0$ is small enough. Then $f_k$ is $L$-smooth for $1 \leq k \leq 3$ and the aggregate cost $f$ is $\mu$-strongly convex. If we apply the above results, we have the following results:

\noindent If $\ep =0$, then the boundedness property holds by Theorem \ref{thm-1-2} for constant stepsize $\alpha (t) \equiv \alpha$ satisfying 
\begin{equation*}
\alpha \leq \frac{1 + \lambda_m (W)}{L}.
\end{equation*}
If $\ep >0$, then the boundedness property holds by Theorem \ref{thm-1-10}   for the stepsize $\alpha(t)$ satisfying
\begin{equation*}
\alpha (t)  \leq \frac{\eta (1-\beta)}{L (\eta +L)}.
\end{equation*}
Here we see that the range guaranteed by the above theorems drastically changes as the value $\ep$ becomes positive from zero.

  
  \medskip


\medskip 

\noindent The purpose of this paper is to provide reasonable answers to the above-mentioned questions. For this we consider the function $G_{\alpha}:\mathbb{R}^{nm} \rightarrow \mathbb{R}$ for $\alpha >0$ defined by
\begin{equation*}
\begin{split}
G_{\alpha}(\mathbf{x})& = \frac{\alpha}{m} \sum_{k=1}^m f_k (x_k) +\frac{1}{2} \mathbf{x}^T (I-W\otimes 1_n ) \mathbf{x}
\\
&=: \alpha\mathbf{F}(\mathbf{x})+ \frac{1}{2} \mathbf{x}^T (I-W\otimes 1_n ) \mathbf{x}.
\end{split}
\end{equation*}
As well-known, the algorithm \eqref{eq-1-1} is then written as 
\begin{equation}\label{eq-1-4}
\mathbf{x}(t+1) = \mathbf{x}(t) - \nabla G_{\alpha (t)} (\mathbf{x}(t)).
\end{equation}
For each $\alpha>0$ we define the following function class
\begin{equation}\label{eq-1-20}
\mathbf{G}_{\alpha} = \Big\{ (f_1, \cdots, f_m) \in (C_1 (\mathbb{R}^n))^m~\Big|~ \textrm{the function $G_{\alpha}$ is strongly convex }\Big\}.
\end{equation}
The following is the main result of this paper.
\begin{thm}\label{thm-3-3}Suppose that $F= (f_1, \cdots, f_m) \in A_{\alpha_0}$  for some $\alpha_0 >0$ and $f_k$ is $L$-smooth for each $1 \leq k \leq m$. Assume that the sequence of stepsize $\{\alpha (t)\}_{t \in \mathbb{N}}$ is a non-increasing sequence satisfying
\begin{equation}\label{eq-1-61}
\alpha (0) \leq \min\Big\{ \frac{1+\lambda_m (W)}{L},~\alpha_0\Big\}.
\end{equation}
Then the sequence $\{\mathbf{x}(t)\}$ is uniformly bounded.
\end{thm}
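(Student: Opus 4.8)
The plan is to exploit the reformulation \eqref{eq-1-4}, which exhibits the DGD iteration as gradient descent \emph{with unit stepsize} applied to the time-varying objective $G_{\alpha(t)}$. Accordingly I set $T_\alpha(\mathbf{x}) := \mathbf{x} - \nabla G_\alpha(\mathbf{x})$, so that $\mathbf{x}(t+1) = T_{\alpha(t)}(\mathbf{x}(t))$, and I aim to show that each $T_{\alpha(t)}$ is non-expansive and that its fixed point, the minimizer of $G_{\alpha(t)}$, stays in a bounded region; uniform boundedness of the orbit then follows. First I would record two structural facts about $G_\alpha$ for $0 < \alpha \le \alpha_0$. Since $F \in \mathbf{G}_{\alpha_0}$ (the class \eqref{eq-1-20}), the function $G_{\alpha_0}$ is strongly convex, and the algebraic identity $G_\alpha = \tfrac{\alpha}{\alpha_0} G_{\alpha_0} + (1-\tfrac{\alpha}{\alpha_0})\cdot\tfrac12 \mathbf{x}^T (I - W\otimes I_n)\mathbf{x}$ writes $G_\alpha$ as a convex combination of a strongly convex function and a convex quadratic, hence $G_\alpha$ is itself strongly convex (with modulus at least $\tfrac{\alpha}{\alpha_0}\mu_0$, where $\mu_0$ is the modulus of $G_{\alpha_0}$). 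This holds for merely $C^1$ costs and guarantees that $G_\alpha$ has a unique minimizer $\mathbf{x}_*(\alpha)$.

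The second fact is the smoothness bound that produces the sharp stepsize. Reading the gradient off \eqref{eq-1-4}, namely $\nabla G_\alpha(\mathbf{x}) = \alpha\nabla\mathbf{F}(\mathbf{x}) + (I - W\otimes I_n)\mathbf{x}$, and using the $L$-smoothness of each $f_k$ together with the fact that $I - W\otimes I_n$ is positive semidefinite with largest eigenvalue $1 - \lambda_m(W)$, I get that $\nabla G_\alpha$ is Lipschitz with constant $M_\alpha := \alpha L + (1-\lambda_m(W))$. The hypothesis \eqref{eq-1-61} and the monotonicity of $\alpha(t)$ give exactly $M_{\alpha(t)} \le \alpha(0) L + 1 - \lambda_m(W) \le 2$. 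I would then invoke the co-coercivity inequality for convex $M$-smooth functions, $\langle \nabla G_\alpha(\mathbf{x}) - \nabla G_\alpha(\mathbf{y}), \mathbf{x}-\mathbf{y}\rangle \ge \tfrac{1}{M_\alpha}\|\nabla G_\alpha(\mathbf{x}) - \nabla G_\alpha(\mathbf{y})\|^2$, expand $\|T_\alpha(\mathbf{x}) - T_\alpha(\mathbf{y})\|^2 = \|\mathbf{x}-\mathbf{y}\|^2 - (\tfrac{2}{M_\alpha}-1)\|\nabla G_\alpha(\mathbf{x}) - \nabla G_\alpha(\mathbf{y})\|^2$, and use $M_\alpha \le 2$ to conclude $\|T_{\alpha(t)}(\mathbf{x}) - T_{\alpha(t)}(\mathbf{y})\| \le \|\mathbf{x}-\mathbf{y}\|$. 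The condition $M_\alpha \le 2$ is precisely $\alpha \le (1+\lambda_m(W))/L$, which is why this threshold is the natural one (and, by the example \eqref{eq-1-62}, essentially sharp).

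With non-expansiveness in hand I would track the distance to the moving fixed point $\mathbf{x}_*^t := \mathbf{x}_*(\alpha(t))$. Since $T_{\alpha(t)}(\mathbf{x}_*^t) = \mathbf{x}_*^t$, non-expansiveness gives $\|\mathbf{x}(t+1) - \mathbf{x}_*^t\| \le \|\mathbf{x}(t) - \mathbf{x}_*^t\|$, and hence $\|\mathbf{x}(t+1) - \mathbf{x}_*^{t+1}\| \le \|\mathbf{x}(t) - \mathbf{x}_*^t\| + \|\mathbf{x}_*^{t+1} - \mathbf{x}_*^t\|$. Telescoping yields $\|\mathbf{x}(t) - \mathbf{x}_*^t\| \le \|\mathbf{x}(0) - \mathbf{x}_*^0\| + \sum_{s<t}\|\mathbf{x}_*^{s+1} - \mathbf{x}_*^s\|$. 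If the curve $\alpha \mapsto \mathbf{x}_*(\alpha)$ is Lipschitz on $(0,\alpha(0)]$ with some constant $C$, then monotonicity of $\alpha(t)$ collapses the sum: $\sum_{s<t}\|\mathbf{x}_*^{s+1}-\mathbf{x}_*^s\| \le C\sum_{s<t}(\alpha(s)-\alpha(s+1)) = C(\alpha(0)-\alpha(t)) \le C\alpha(0)$. Combined with a uniform bound $R_* := \sup_{0<\alpha\le\alpha(0)}\|\mathbf{x}_*(\alpha)\| < \infty$ (which follows once $\mathbf{x}_*(\cdot)$ extends continuously to $\alpha = 0$), this gives $\|\mathbf{x}(t)\| \le R_* + \|\mathbf{x}(0)-\mathbf{x}_*^0\| + C\alpha(0)$ for all $t$, the desired uniform bound.

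The crux — and the step I expect to be the main obstacle — is establishing that $\mathbf{x}_*(\alpha)$ is Lipschitz (equivalently, of bounded variation) on the whole interval $(0,\alpha(0)]$ including the limit $\alpha\to 0^+$; this is the only place where the diminishing-stepsize case $\alpha(t)\to 0$ is genuinely harder than the constant-stepsize case. Formally differentiating $\alpha\nabla\mathbf{F}(\mathbf{x}_*(\alpha)) + (I-W\otimes I_n)\mathbf{x}_*(\alpha)=0$ gives $\tfrac{d}{d\alpha}\mathbf{x}_*(\alpha) = -[\nabla^2 G_\alpha(\mathbf{x}_*(\alpha))]^{-1}\nabla\mathbf{F}(\mathbf{x}_*(\alpha))$, and the inverse Hessian blows up like $1/\alpha$ along the consensus subspace $\ker(I - W\otimes I_n)$, where the strong convexity of $G_\alpha$ degenerates. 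The resolution is a cancellation: projected onto the consensus subspace the forcing $\nabla\mathbf{F}(\mathbf{x}_*(\alpha))$ reduces to $\nabla f$ evaluated near its root $x_*$, because strong convexity of the aggregate $f$ forces $\mathbf{x}_*(\alpha)\to \mathbf{1}\otimes x_*$ with disagreement $O(\alpha)$, so the consensus component of the forcing is itself $O(\alpha)$ and the product remains $O(1)$ uniformly as $\alpha\to 0$. Making this rigorous for merely $C^1$, $L$-smooth (possibly non-convex) local costs — replacing the derivative heuristic by a direct perturbation estimate on $\mathbf{x}_*(\alpha)$ split into its consensus and disagreement components — is the technical heart of the proof.
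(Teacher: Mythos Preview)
Your non-expansiveness argument for $T_{\alpha(t)}$ via co-coercivity, and the telescoping of $\|\mathbf{x}(t)-\mathbf{x}_*^{\alpha(t)}\|$ against the moving fixed point, are exactly what the paper does (Lemma~\ref{lem-3-1}). The divergence is at the step you correctly flag as the crux: controlling $\sum_s \|\mathbf{x}_*^{\alpha(s+1)}-\mathbf{x}_*^{\alpha(s)}\|$ when $\alpha(t)\to 0$. The paper does \emph{not} establish the uniform Lipschitz bound you propose; its Theorem~\ref{lem-1-2}(2) gives only
\[
\|\mathbf{x}_*^{\alpha}-\mathbf{x}_*^{\beta}\|\le \frac{2\alpha_0 C_1}{\mu\,\beta}\,|\alpha-\beta|,
\]
with precisely the $1/\beta$ blow-up you worried about. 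Rather than removing this factor, the paper fixes a threshold $\mathbf{a}_c=\dfrac{\mu(1-\beta)}{4L(\mu+\alpha_0 L)}$ and splits time at $t_0:=\sup\{t:\alpha(t)\ge\mathbf{a}_c\}$. For $t\le t_0$ it telescopes as you do, using $1/\alpha(t)\le 1/\mathbf{a}_c$, to get an explicit bound on $\|\mathbf{x}(t_0+1)\|$. For $t\ge t_0+1$ the stepsize satisfies $\alpha(t)<\mathbf{a}_c$, which is below the threshold of the earlier boundedness result of \cite{CK} (Theorem~\ref{thm-3-2}), so that theorem is invoked with $\mathbf{x}(t_0+1)$ as fresh initial data to finish.

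Thus the paper trades your cancellation analysis for a black-box reduction once $\alpha(t)$ is small. Your route, if the uniform-Lipschitz claim can be made rigorous for merely $C^1$ costs, would be self-contained and would not rely on \cite{CK}; incidentally, the consensus component of $\nabla\mathbf{F}(\mathbf{x}_*^{\alpha})$ is not merely $O(\alpha)$ but identically zero (sum the $m$ coordinates of the first-order condition and use $\mathbf{1}^T(I-W)=0$), which sharpens your heuristic. But you should know that the paper neither attempts nor needs this estimate, and as written your proposal leaves exactly this step open.
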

This result naturally extends the previous works \cite{YLY, CK}. We mention that the result \cite{YLY} was obtained for constant stepsize $\alpha (t) \equiv \alpha \leq \frac{1+\lambda_m (W)}{L}$ under the assumption that each cost $f_k$ is convex. Meanwhile, the result \cite{CK} was obtained only assuming that the aggregate cost $f$ is strongly convex but the stepsize range is more conservative. We summarize the results of \cite{YLY, CK} and this work in Table \ref{known results1}. 

\begin{table}[ht]
\centering
\begin{tabular}{|c|c|c|c| }\cline{1-4}
& Convexity condition& Type of stepsize & Bound on stepsize  
\\
\hline
&&&\\[-1em]
\cite{YLY} & Each $f_j$ is convex  & \makecell{constant \\ $\alpha (t) \equiv \alpha$} & $\alpha \leq \frac{(1+\lambda_m (W))}{L}$ 
\\
&&&\\[-1em]
\hline
&&&\\[-1em]
\makecell{\cite{CK}} & $f$ is $\mu$-strongly convex & any sequence &$\alpha (t) \leq \frac{\eta (1-\beta)}{L (\eta +L)}$
\\
&&&\\[-1em]
\hline
&&&\\[-1em]
\makecell{This\\work} & $G_{\alpha_0}$ is strongly convex & \makecell{non-increasing\\ sequence} &$\alpha (t) \leq \min\Big\{ \frac{1+\lambda_m (W)}{L},~\alpha_0\Big\}.$
\\
\hline
\end{tabular}
\vspace{0.1cm}
\caption{This table summarizes the conditions for the uniform boundedness of the sequence $\mathbf{x}(t)$ of DGD algorithm \eqref{eq-1-4}.}\label{known results1}
\end{table} 

For the example \eqref{eq-1-62}, we set $L=10$, $\mu =1$, and
\begin{equation*}
W = \begin{pmatrix} 0.4 & 0.3 & 0.3 \\ 0.3 &0.3 & 0.4 \\ 0.3 & 0.4 & 0.3\end{pmatrix}.
\end{equation*}
Then for $0\leq \ep \leq 10$ we have 
\begin{equation*}
\alpha_S:=\frac{\eta (1-\beta)}{L (\eta +L)} = 0.0075, \quad \frac{1 + \lambda_m (W)}{L} = 0.2.
\end{equation*}
Finally the value $\alpha_0$ of Theorem \ref{thm-3-3} is computed for each $\ep \in (0,10]$. The graph of Figure \ref{fig2} shows that the value $\alpha_0$ is larger than $(1+\lambda_m (W))/L$ for small $\ep>0$, but it becomes smllaer than the latter value $\ep>0$ is larger than $3$. For the detail computing these values, we refer to Section \ref{sec-5}.

\begin{figure}[h]
	\centering
		\includegraphics[width=7cm]{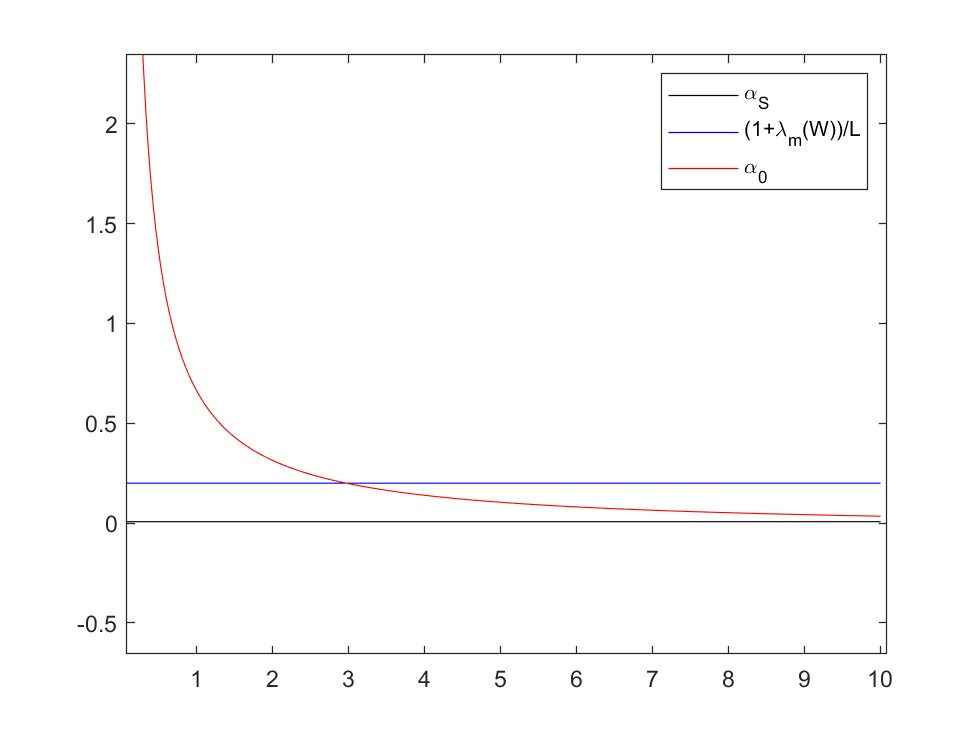} 	\caption{The figure corresponds to the graph of values $\alpha_0$, $\alpha_S$ and $\frac{1+\lambda_m (W)}{L}$ with respect to $\ep \geq 0$.}
 \label{fig2}
\end{figure}

The function class $\mathbf{G}_{\alpha}$ is closely related to the function class that the aggregate cost $f$ is strongly convex. To explain the relation, for each $\alpha>0$ we let
\begin{equation*}
S_{\alpha}  = \{ (f_1, \cdots, f_m) \in (C_1 (\mathbb{R}^n)^m~\mid~ \textrm{the function $f$ is $\alpha$-strongly convex}\}.
\end{equation*}
Then, for given $F = (f_1, \cdots, f_m) \in (C_1 (\mathbb{R}^n)^m$, we will prove that the following relation between the class $\mathbf{G}_{\alpha}$ and $S_{\alpha}$ in Section \ref{sec-2}:
\begin{enumerate}
\item If $f_k$ is quadratic and convex for each $1 \leq k \leq m$, then 
\begin{equation*}
F \in S_{\mu} \textrm{~for some~} \mu >0 \Longrightarrow  F \in \mathbf{G}_{\alpha}  \textrm{~for all~} \alpha >0.
\end{equation*}
\item If $f_k$ is quadratic for each $1 \leq k \leq m$, then
\begin{equation*}
F \in S_{\mu} \textrm{~for some~} \mu >0 \Longrightarrow  F \in \mathbf{G}_{\alpha}  \textrm{~for some~} \alpha >0.
\end{equation*}
\item It always holds that
\begin{equation*}
F \in \mathbf{G}_{\alpha}\textrm{~for some~} \alpha >0~\Longrightarrow ~F \in S_{\mu}~\textrm{ for some} ~\mu >0.
\end{equation*}
\end{enumerate} 

For given $F \in \mathbf{G}_{\alpha}$, let us denote the optimal point of $G_{\alpha}$ by $\mathbf{x}_*^{\alpha}$. In order to prove the above result, we exploit the fact that the algorithm \eqref{eq-1-1} at step $t$ is interpreted as the gradient descent of $G_{{\alpha}(t)}$ as in \eqref{eq-1-4}. Using this fact, it is not difficult to derive the result of \mbox{Theorem \ref{thm-3-3}} if the stepsize is given by a constant $\alpha (t) \equiv \alpha >0$ since the gradient descent descent \eqref{eq-1-4} converges to the minimizer $\mathbf{x}_*^{\alpha}$. However, when the stepsize is varying, then we may not interprete \eqref{eq-1-4} as a gradient descent algorithm of a single objective function. In order to handle this case, we prove the continuity and boundedness property of $x_{\alpha}$ with respect to $\alpha$. Precisely, we obtain the following result.
\begin{thm}\label{lem-1-2} Assume that $G_{\alpha_0}$ is $\mu$-strongly convex for some $\alpha_0 >0$ and $\mu >0$. Then the following results hold:
\begin{enumerate}
\item For $\alpha \in (0,\alpha_0]$ we have
\begin{equation*}
\|\mathbf{x}_*^{\alpha}\|^2 \leq \frac{2\alpha_0}{\mu} f (0).
\end{equation*} 
\item  For all $\alpha, \beta \in (0,\alpha_0]$ we have
\begin{equation*}
\|\mathbf{x}_*^{\alpha} - \mathbf{x}_*^{\beta}\| \leq  \frac{2\alpha_0 C_1|\beta -\alpha|}{\mu\beta},
\end{equation*}
where  
\begin{equation*}
C_1 = \sup_{s\in [0,1]} \sup_{\alpha, \beta \in (0, \alpha_0]}\Big\|\nabla \mathbf{F} ( \mathbf{x}_*^{\beta}  + s(\mathbf{x}_*^{\alpha}- \mathbf{x}_*^{\beta}))\Big\|.
\end{equation*}
\end{enumerate}
\end{thm}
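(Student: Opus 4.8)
The plan is to first record how the strong convexity constant scales with $\alpha$, and then treat the two assertions separately. The starting observation is the algebraic identity
\begin{equation*}
G_{\alpha}(\mathbf{x}) = \frac{\alpha}{\alpha_0} G_{\alpha_0}(\mathbf{x}) + \Big(\frac12 - \frac{\alpha}{2\alpha_0}\Big)\, \mathbf{x}^T (I - W\otimes 1_n)\mathbf{x},
\end{equation*}
which follows by substituting $G_{\alpha_0} = \alpha_0 \mathbf{F} + \tfrac12 \mathbf{x}^T(I-W\otimes 1_n)\mathbf{x}$. Since $W$ is doubly stochastic, its eigenvalues lie in $[-1,1]$, so $I - W \otimes 1_n \succeq 0$; hence for $\alpha \in (0,\alpha_0]$ the last term is a convex quadratic and $G_\alpha$ is the sum of the $\tfrac{\alpha}{\alpha_0}\mu$-strongly convex function $\tfrac{\alpha}{\alpha_0}G_{\alpha_0}$ and a convex function. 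Thus $G_\alpha$ is $\tfrac{\alpha\mu}{\alpha_0}$-strongly convex for every $\alpha \in (0,\alpha_0]$, which in particular guarantees that $\mathbf{x}_*^{\alpha}$ exists and is unique.

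For assertion (1) I would apply the strong convexity inequality of $G_\alpha$ at its minimizer with reference point $\mathbf{0}$: since $\nabla G_\alpha(\mathbf{x}_*^{\alpha}) = 0$,
\begin{equation*}
G_\alpha(\mathbf{0}) \geq G_\alpha(\mathbf{x}_*^{\alpha}) + \frac{\alpha\mu}{2\alpha_0}\|\mathbf{x}_*^{\alpha}\|^2.
\end{equation*}
Now $G_\alpha(\mathbf{0}) = \alpha \mathbf{F}(\mathbf{0}) = \alpha f(0)$, while $G_\alpha(\mathbf{x}_*^{\alpha}) \geq \alpha \mathbf{F}(\mathbf{x}_*^{\alpha}) \geq 0$, because the consensus term $\tfrac12\mathbf{x}^T(I-W\otimes 1_n)\mathbf{x}$ is nonnegative and (under the standing normalization that the local costs are nonnegative) $\mathbf{F}(\mathbf{x}_*^{\alpha})\ge 0$. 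Rearranging gives $\tfrac{\alpha\mu}{2\alpha_0}\|\mathbf{x}_*^{\alpha}\|^2 \le \alpha f(0)$, which is the claimed bound.

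For assertion (2) the key simplification comes from the two optimality conditions
\begin{equation*}
\alpha \nabla \mathbf{F}(\mathbf{x}_*^{\alpha}) + (I - W\otimes 1_n)\mathbf{x}_*^{\alpha} = 0, \qquad \beta \nabla \mathbf{F}(\mathbf{x}_*^{\beta}) + (I - W\otimes 1_n)\mathbf{x}_*^{\beta} = 0.
\end{equation*}
Substituting $(I - W\otimes 1_n)\mathbf{x}_*^{\alpha} = -\alpha\nabla\mathbf{F}(\mathbf{x}_*^{\alpha})$ from the first identity collapses the consensus part of $\nabla G_\beta(\mathbf{x}_*^{\alpha})$ and yields $\nabla G_\beta(\mathbf{x}_*^{\alpha}) = (\beta - \alpha)\nabla \mathbf{F}(\mathbf{x}_*^{\alpha})$. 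I would then use monotonicity of $\nabla G_\beta$ (strong convexity with constant $\tfrac{\beta\mu}{\alpha_0}$) together with $\nabla G_\beta(\mathbf{x}_*^{\beta}) = 0$:
\begin{equation*}
\frac{\beta\mu}{\alpha_0}\|\mathbf{x}_*^{\alpha} - \mathbf{x}_*^{\beta}\|^2 \leq \langle \nabla G_\beta(\mathbf{x}_*^{\alpha}), \mathbf{x}_*^{\alpha} - \mathbf{x}_*^{\beta}\rangle = (\beta-\alpha)\langle \nabla\mathbf{F}(\mathbf{x}_*^{\alpha}), \mathbf{x}_*^{\alpha}-\mathbf{x}_*^{\beta}\rangle.
\end{equation*}
Cauchy--Schwarz, division by $\|\mathbf{x}_*^{\alpha}-\mathbf{x}_*^{\beta}\|$, and the bound $\|\nabla\mathbf{F}(\mathbf{x}_*^{\alpha})\| \le C_1$ (the case $s=1$ in the definition of $C_1$) then give $\|\mathbf{x}_*^{\alpha}-\mathbf{x}_*^{\beta}\| \le \tfrac{\alpha_0 C_1 |\beta-\alpha|}{\mu\beta}$, which implies (indeed slightly improves) the stated estimate.

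The conceptual heart of the argument is the preliminary step: transferring strong convexity from $G_{\alpha_0}$ to every $G_\alpha$ with $\alpha \le \alpha_0$ while tracking that the constant scales like $\alpha/\alpha_0$; this is precisely what produces the $\beta$-dependent denominator and is the property that lets the varying-stepsize analysis of Theorem \ref{thm-3-3} go through. The two remaining points to be careful about are the nonnegativity $G_\alpha(\mathbf{x}_*^{\alpha}) \ge 0$ used in (1), which rests on the normalization $\mathbf{F}(\mathbf{x}_*^{\alpha})\ge 0$, and the finiteness of $C_1$ in (2): this is ensured by assertion (1), which confines every minimizer $\mathbf{x}_*^{\alpha}$, and hence every segment joining two of them, to a fixed ball on which the continuous map $\nabla\mathbf{F}$ is bounded.
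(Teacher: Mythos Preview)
Your argument is correct. The preliminary step (scaling of the strong convexity constant) and assertion~(1) follow exactly the route the paper takes in Lemma~\ref{lem-2-1} and the first paragraph of the proof of Theorem~\ref{lem-1-2}; the paper also tacitly uses $G_\alpha(\mathbf{x}_*^{\alpha})\ge 0$, so the caveat you flag is shared by both proofs rather than a weakness of yours.

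For assertion~(2) you take a genuinely different path. The paper works at the level of \emph{function values}: it applies strong convexity of $G_\beta$ to obtain
\[
(\beta-\alpha)\big(\mathbf{F}(\mathbf{x}_*^{\alpha})-\mathbf{F}(\mathbf{x}_*^{\beta})\big)\ \ge\ \frac{\mu\beta}{2\alpha_0}\,\|\mathbf{x}_*^{\alpha}-\mathbf{x}_*^{\beta}\|^2,
\]
and then estimates $\mathbf{F}(\mathbf{x}_*^{\alpha})-\mathbf{F}(\mathbf{x}_*^{\beta})$ via the mean-value integral $\int_0^1\nabla\mathbf{F}(\mathbf{x}_*^{\beta}+s(\mathbf{x}_*^{\alpha}-\mathbf{x}_*^{\beta}))\,ds$, which is why the constant $C_1$ is defined with the supremum over $s\in[0,1]$. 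You instead work at the \emph{gradient} level: the first-order conditions give $\nabla G_\beta(\mathbf{x}_*^{\alpha})=(\beta-\alpha)\nabla\mathbf{F}(\mathbf{x}_*^{\alpha})$ directly, and then strong monotonicity of $\nabla G_\beta$ and Cauchy--Schwarz finish the job. Your route is shorter, uses only the endpoint $s=1$ of the definition of $C_1$, and produces the bound without the factor $2$ (so the stated inequality follows a fortiori). The paper's approach, on the other hand, explains the otherwise slightly mysterious form of $C_1$.
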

In the above result, we remark that the constant $C_1$ is bounded since $\mathbf{F}$ is smooth and $\mathbf{x}_*^{\alpha}$ are uniformly bounded for $\alpha \in (0, \alpha_0]$. We will make use of this result to prove Theorem \ref{thm-3-3}.    
 
 The rest of this paper is organized as follows. Section \ref{sec-2} is devoted to study the function class $\mathbf{G}_{\alpha}$. In Section \ref{sec-3} we exploit the boundedness and continuity property of the optimizer of $G_{\alpha}$ with respect to $\alpha$. Based on the property, we prove the main result in Section \ref{sec-4}. Section 5 provides some numerical experiments supporting the result of this paper.

\section{Properties of the class $\mathbf{G}_{\alpha}$}\label{sec-2}

In this section,  we study the function classe $\mathbf{G}_{\alpha}$ defined in \eqref{eq-1-20}. Before this, we introduce two standard assumptions on the graph $\mathcal{G}$ ans its associated weight $W = \{w_{ij}\}_{1\leq i,j \leq m}$.

\begin{ass}\label{graph}
The communication graph $\mathcal{G}$ is undirected and connected, i.e., there exists a path between any two agents.
\end{ass}
We define the mixing matrix $W = \{w_{ij}\}_{1 \leq i,j \leq m}$ as follows. The nonnegative weight $w_{ij}$ is given for each communication link $\{i,j\}\in \mathcal{E},$ where $w_{ij}\neq0$ if $\{i,j\}\in\mathcal{E}$ and $w_{ij} = 0$ if $\{i,j\}\notin\mathcal{E}$. In this paper, we make the following assumption on the mixing matrix $W$. 
\begin{ass}\label{ass-1-1}
The mixing matrix $W = \{w_{ij}\}_{1 \leq i,j \leq m}$ is doubly stochastic, i.e., $W\mathbf{1}=\mathbf{1}$ and $\mathbf{1}^T W = \mathbf{1}^T$. In addition, $w_{ii}>0$ for all $i \in \mathcal{V}$. 
\end{ass}
 
In the following theorem, we study the function class $\mathbf{G}_{\alpha}$ when each local cost is a quadratic form.

\begin{thm}Assume that each cost is a quadratic form $f_k (x) =\frac{1}{2}x_k^T A_k x_k$.  
\begin{enumerate}
\item Assume that each $f_k$ is convex and $f$ is $\mu$-strongly convex. Then $G_{\alpha}$ is strongly convex for any $\alpha >0$. 
\item Assume that $f$ is $\mu$-strongly convex. Then there exists a value $\alpha_0>0$ such that for $\alpha >\alpha_0$, $G_{\alpha}$ is strongly convex.
\end{enumerate}
\end{thm}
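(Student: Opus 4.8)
The plan is to exploit that, for quadratic costs, $G_\alpha$ is itself a quadratic form with a \emph{constant} Hessian, so that strong convexity of $G_\alpha$ is equivalent to positive definiteness of a single symmetric matrix. Writing $D=\mathrm{diag}(A_1,\dots,A_m)$ for the block-diagonal Hessian of $\mathbf{x}\mapsto\sum_k f_k(x_k)$, we have
\[
\nabla^2 G_\alpha=\frac{\alpha}{m}\,D+(I-W\otimes I_n)=:H_\alpha,
\]
and $G_\alpha$ is strongly convex if and only if $H_\alpha\succ 0$. I will use the spectral structure of the second summand: under Assumptions \ref{graph}--\ref{ass-1-1} the matrix $I-W\otimes I_n$ is positive semidefinite with eigenvalues $1-\lambda_i(W)\ge 0$, its kernel is exactly the consensus subspace $\mc=\{\mathbf{1}_m\otimes v:\ v\in\mathbb{R}^n\}$, and on $\mc^{\perp}$ it is bounded below by $\gamma:=1-\lambda_2(W)>0$, where $\lambda_2(W)$ is the second largest eigenvalue of $W$. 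Two further facts are used repeatedly: $f$ is $\mu$-strongly convex is equivalent to $\bar A:=\frac1m\sum_k A_k\succeq\mu I$, and on the consensus subspace the cost form collapses to the aggregate, since $\mathbf{z}=\mathbf{1}_m\otimes v$ gives $\mathbf{z}^T D\,\mathbf{z}=\sum_k v^T A_k v=m\,v^T\bar A v$.

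For part (1), convexity of each $f_k$ means $A_k\succeq 0$, hence $D\succeq 0$, so $H_\alpha$ is a sum of two positive semidefinite matrices and is positive semidefinite for every $\alpha>0$. To get strict positivity I argue by contradiction: if $\mathbf{z}^T H_\alpha\mathbf{z}=0$ then both nonnegative terms vanish; vanishing of the Laplacian term forces $\mathbf{z}\in\mc$, say $\mathbf{z}=\mathbf{1}_m\otimes v$, and then vanishing of the cost term gives $0=\frac{\alpha}{m}\mathbf{z}^T D\,\mathbf{z}=\alpha\,v^T\bar A v\ge\alpha\mu\|v\|^2$, whence $v=0$. Thus $H_\alpha\succ 0$ for all $\alpha>0$, which is part (1).

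For part (2) the individual $A_k$ may be indefinite, so the two summands of $H_\alpha$ can no longer be combined term by term; this is the crux. My plan is a two-subspace estimate. Decomposing $\mathbf{z}=\mathbf{z}_\parallel+\mathbf{z}_\perp$ along $\mc\oplus\mc^{\perp}$ and putting $M:=\max_k\|A_k\|$, I bound the Laplacian contribution below by $\gamma\|\mathbf{z}_\perp\|^2$, the cost contribution on $\mc$ below by $\frac{\alpha\mu}{m}\|\mathbf{z}_\parallel\|^2$ (using $\bar A\succeq\mu I$), the cross term below by $-\frac{2\alpha M}{m}\|\mathbf{z}_\parallel\|\,\|\mathbf{z}_\perp\|$, and the remaining term below by $-\frac{\alpha M}{m}\|\mathbf{z}_\perp\|^2$. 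This reduces $H_\alpha\succ 0$ to positive definiteness of the $2\times 2$ form in $(\|\mathbf{z}_\parallel\|,\|\mathbf{z}_\perp\|)$ with matrix $\bigl(\begin{smallmatrix}\alpha\mu/m & -\alpha M/m\\ -\alpha M/m & \gamma-\alpha M/m\end{smallmatrix}\bigr)$, whose leading entry is positive and whose determinant equals $\frac{\alpha}{m}\bigl[\mu\gamma-\frac{\alpha}{m}M(\mu+M)\bigr]$. Hence $H_\alpha\succ 0$ whenever $\alpha<\alpha_0:=\frac{m\,\mu\gamma}{M(\mu+M)}$, which produces an explicit positive threshold $\alpha_0$ and, in particular, a positive $\alpha$ at which $G_\alpha$ is strongly convex; here $\alpha_0$ is exactly the upper bound on the stepsize invoked by the hypothesis $\alpha(0)\le\alpha_0$ in Theorem \ref{thm-3-3}.

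I expect the main technical point to be the cross term: the block-diagonal $D$ does not respect the eigenbasis of $W$, so it couples the consensus component $\mathbf{z}_\parallel$ (where the Laplacian gives no control and only the $O(\alpha)$ term $\frac{\alpha\mu}{m}\|\mathbf{z}_\parallel\|^2$ is available) to the orthogonal component $\mathbf{z}_\perp$ (where the Laplacian gives $O(1)$ control), and the indefiniteness of $D$ is precisely what makes $\alpha_0$ finite rather than $+\infty$. Once a single admissible $\alpha_0$ is secured, a cleaner packaging of the whole $\alpha$-range is the identity $H_\alpha=\frac{\alpha}{\alpha_0}H_{\alpha_0}+\bigl(1-\frac{\alpha}{\alpha_0}\bigr)(I-W\otimes I_n)$: since the Laplacian is positive semidefinite, $H_{\alpha_0}\succ 0$ immediately propagates to $H_\alpha\succ 0$ for all $\alpha\in(0,\alpha_0]$, so it suffices to certify positive definiteness at the one value $\alpha_0$.
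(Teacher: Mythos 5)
Your proof is correct, and it establishes the statement in the direction the paper's own proof actually supports: the ``for $\alpha>\alpha_0$'' in part (2) must be read as ``for $\alpha<\alpha_0$'' (a typo in the statement), since the large-$\alpha$ claim is false whenever some $A_k$ has a negative eigenvalue --- taking $\mathbf{z}$ supported on block $k$ along such a direction gives $\mathbf{z}^T H_\alpha \mathbf{z}\to-\infty$ as $\alpha\to\infty$, exactly the observation behind your remark that $\alpha_0$ is finite. The paper's estimate on $Z_c$ likewise requires $(1-\beta)>\alpha\big(Q/c+\bar{Q}/2\big)$, i.e.\ $\alpha$ small, and the small-$\alpha$ version is the one used downstream in Lemma \ref{lem-2-1} and Theorem \ref{thm-3-3}; so you silently corrected the statement rather than deviating from its intent.

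As for the route: you share the paper's skeleton --- orthogonal splitting into consensus and disagreement (the paper's $\mathbf{x}=1\otimes\bar{x}+u$ is your $\mathbf{z}_\parallel+\mathbf{z}_\perp$, since $u\perp$ the consensus subspace), the spectral gap $\gamma=1-\beta$ on the disagreement space, strong convexity of the aggregate on the consensus space, and norm control of the cross terms (the paper's two constants $Q$, $\bar{Q}$ of \eqref{eq-2-12}--\eqref{eq-2-13} both collapse to your single $M=\|D\|$) --- but the packaging genuinely differs. The paper proves the lower bound $G_\alpha(\mathbf{x})\geq c'\|\mathbf{x}\|^2$ by a case split over the cones $Y_c=\{\|u\|\leq c\|\bar{x}\|\}$ and $Z_c=\{\|u\|\geq c\|\bar{x}\|\}$ with a free parameter $c$ to be tuned (first $\mu>2cQ+c^2\bar{Q}$, then $\alpha<(1-\beta)/(Q/c+\bar{Q}/2)$), which leaves the threshold implicit in $c$; your $2\times2$ Sylvester criterion eliminates the parameter and yields the explicit threshold $\alpha_0=m\mu\gamma/(M(\mu+M))$, handling both regimes at once. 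For part (1) the paper runs the same cone estimates (using $A_k\succeq 0$ to discard the cost term on $Z_c$), obtaining an explicit modulus $\min\big\{\tfrac{(1-\beta)c^2}{n+c^2},\tfrac{\alpha(\mu-2Qc)}{2(n+c^2)}\big\}$, whereas your kernel/contradiction argument is shorter and only qualitative --- which suffices, since $H_\alpha\succ 0$ in finite dimensions already gives modulus $\lambda_{\min}(H_\alpha)>0$. Your closing identity $H_\alpha=\tfrac{\alpha}{\alpha_0}H_{\alpha_0}+\big(1-\tfrac{\alpha}{\alpha_0}\big)(I-W\otimes I_n)$ is precisely the paper's Lemma \ref{lem-2-1} in Hessian form. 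One small caveat: your aside that this $\alpha_0$ is ``exactly'' the bound invoked in Theorem \ref{thm-3-3} overstates --- there $\alpha_0$ is any value with $F\in\mathbf{G}_{\alpha_0}$, so your explicit value is a certified lower estimate of the admissible range, not its exact endpoint.
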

\begin{proof}
Then $G_{\alpha}$ is given as
\begin{equation*}
G_{\alpha}(\mathbf{x}) = \frac{\alpha}{2m} \sum_{k=1}^m x_k^T A_k x_k + \frac{1}{2} \mathbf{x}^T (I-W \otimes 1_n) \mathbf{x}.
\end{equation*}
 Choose a constant $Q>0$  and $\bar{Q}>0$ such that
\begin{equation}\label{eq-2-12}
\Big\| \sum_{k=1}^n A_k u_k \Big\| \leq Q \|u\| \quad \forall~ u \in \mathbb{R}^{mn}
\end{equation}
and
\begin{equation}\label{eq-2-13}
\Big\| (A_1 u_1, \cdots, A_m u_m)\Big\| \leq \bar{Q}\|u\|\quad \forall~ u \in \mathbb{R}^{nm}.
\end{equation}
Let $\mathbf{x} = 1\otimes \bar{x} + u$ with $u = \mathbf{x}- 1 \otimes \bar{x}$ and $
\bar{x} = \frac{1}{n}\sum_{k=1}^n x_k$. Using that $W 1_m =1_m$, we find 
\begin{equation}\label{eq-2-60}
G_{\alpha}(\mathbf{x}) = \frac{\alpha}{2m} \sum_{k=1}^n (\bar{x}+u_k)^T A_k (\bar{x}+u_k) + \frac{1}{2} u^T (I-W\otimes 1_n) u.
\end{equation}
  Take a constant $c>0$ and consider
  \begin{equation*}
  Y_c =\{ \mathbf{x}= 1 \bar{x} + u ~:~ \|u\| \leq c\|\bar{x}\|\}\quad \textrm{and}\quad    Z_c =\{ \mathbf{x}= 1 \bar{x} + u ~:~ \|u\| \geq c\|\bar{x}\|\}.
  \end{equation*}
If $\mathbf{x} \in Y_c$, then we have
\begin{equation}\label{eq-2-10}
\|\mathbf{x}\|^2 = n\|\bar{x}\|^2 + \|u\|^2 \leq (n+c^2) \|\bar{x}\|^2.
\end{equation}
For $\mathbf{x} \in Z_c$, it holds that 
\begin{equation}\label{eq-2-11}
\|\mathbf{x}\|^2 = n\|\bar{x}\|^2 + \|u\|^2 \leq \frac{(n+c^2)}{c^2} \|u\|^2.
\end{equation}
Now we proceed to prove (1). For $\mathbf{x} \in Y_c$ we use \eqref{eq-2-12} to estimate 
\eqref{eq-2-60} as
\begin{equation*}
\begin{split}
G_{\alpha}(\mathbf{x})& = \frac{\alpha}{2m}\Big[ \bar{x}^T \Big( \sum_{k=1}^m A_k\Big) \bar{x} + 2 \bar{x} \Big( \sum_{k=1}^m A_k u_k\Big) + \sum_{k=1}^m u_k^T A_k u_k \Big] + \frac{1}{2} u^T (I- W\otimes I_n) u
\\
&\geq \frac{\alpha}{2}\Big[ \mu \|\bar{x}\|^2 - 2Q \|\bar{x}\| \|u\|\Big]
\\
& \geq \frac{\alpha}{2} [\mu - 2Qc] \|\bar{x}\|^2.
\end{split}
\end{equation*}
Using \eqref{eq-2-10} here, we find
\begin{equation*}
G_{\alpha}(\mathbf{x}) \geq \frac{\alpha (\mu -2Qc)}{2 (n+c^2)} \|\mathbf{x}\|^2.
\end{equation*}
For $\mathbf{x} \in Z_c$ we have
\begin{equation*}
G_{\alpha}(\mathbf{x}) \geq (1-\beta) \|u\|^2 \geq \frac{(1-\beta)c^2}{n+c^2} \|\mathbf{x}\|^2.
\end{equation*}
Here we used the fact that
\begin{equation*}
y^T (I-W) y \geq (1-\beta) \|y\|^2 
\end{equation*}
for any $y \in \mathbb{R}^m$ satisfying $\langle y, 1_m\rangle =0$, where $\beta \in (0,1)$ is the second largest \mbox{eigenvalue of $W$.}

Combining the above two estimates, we find
\begin{equation*}
G_{\alpha}(\mathbf{x}) \geq  \min\Big\{\frac{(1-\beta)c^2}{n+c^2}, \frac{\alpha (\mu -2Qc)}{2 (n+c^2)}\Big\} \|\mathbf{x}\|^2,
\end{equation*}
which provesthe first assertion (1). 

Next we prove (2). For $\mathbf{x} \in Y_c$ we have
\begin{equation*}
\begin{split}
G_{\alpha}(\mathbf{x})& \geq \frac{\alpha}{2} \Big[ \mu \|\bar{x}\|^2 - 2 \|\bar{x}\| \Big\| \sum_{k=1}^m A_k u_k \Big\| - \|u\| \Big\|\sum_{k=1}^n A_k u_k\Big\|\Big]
\\
& \geq \frac{\alpha}{2} \Big[ \mu \|\bar{x}\|^2 - 2Q \|\bar{x}\|\|u\| -\bar{Q}\|u\|^2\Big]
\\
& \geq\frac{\alpha}{2} \Big[ \mu \|\bar{x}\|^2 - (2cQ + c^2 \bar{Q}) \|\bar{x}\|^2\Big].
\end{split}
\end{equation*}
For $\mathbf{x} \in Z_c$ we estimate
\begin{equation*}
\begin{split}
G_{\alpha}(\mathbf{x}) & \geq \frac{\alpha}{2} \sum_{k=1}^m \bar{x}^T A_k \bar{x} + \alpha \sum_{k=1}^m \bar{x} (A_k u_k) + \frac{\alpha}{2} \sum_{k=1}^m u_k^T A_k u_k + (1-\beta) \|u\|^2
\\
& \geq \frac{\alpha \mu}{2}\|\bar{x}\|^2 - \alpha Q\|\bar{x}\| \|u\| - \frac{\alpha \bar{Q}}{2} \|u\|^2 + (1-\beta) \|u\|^2
\\
& \geq \frac{\alpha \mu}{2}\|\bar{x}\|^2 - \Big( \frac{\alpha Q}{c} + \frac{\alpha \bar{Q}}{2}\Big) \|u\|^2 + (1-\beta) \|u\|^2.
\end{split}
\end{equation*}
This gives the following estimate
\begin{equation*}
G_{\alpha}(\mathbf{x})\geq \frac{c^2}{n+c^2}\Big[(1-\beta) -\Big( \frac{\alpha Q}{c} + \frac{\alpha \bar{Q}}{2}\Big) \Big] \|\mathbf{x}\|^2.
\end{equation*}
This completes the proof of the second assertion (2).
\end{proof}

We also have the following result.
\begin{thm}\label{thm-2-2} If $G_{\alpha}$ is $\mu$-stronlgy convex for some $\alpha >0$, then $f$ is $\frac{\mu}{\alpha}$-stronlgy convex.
\end{thm}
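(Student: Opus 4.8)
The plan is to restrict $G_{\alpha}$ to the consensus subspace and simply read off the strong convexity of $f$ from that of $G_{\alpha}$. For $x \in \mathbb{R}^n$ write $\mathbf{1}\otimes x = (x,\dots,x) \in \mathbb{R}^{nm}$ for its consensus embedding. The key structural fact is that the interaction term vanishes on consensus vectors: since $W$ is doubly stochastic we have $W\mathbf{1}=\mathbf{1}$, hence $(W\otimes I_n)(\mathbf{1}\otimes x) = (W\mathbf{1})\otimes x = \mathbf{1}\otimes x$, and therefore $(I-W\otimes I_n)(\mathbf{1}\otimes x)=0$. Consequently $\mathbf{x}^T(I-W\otimes I_n)\mathbf{x}=0$ when $\mathbf{x}=\mathbf{1}\otimes x$, and since $\mathbf{F}(\mathbf{1}\otimes x)=\frac{1}{m}\sum_{k=1}^m f_k(x)=f(x)$, we obtain the identity $G_{\alpha}(\mathbf{1}\otimes x)=\alpha f(x)$ for every $x\in\mathbb{R}^n$. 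I would isolate this identity as the first step.

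Next I would use the convex-combination characterization of strong convexity, namely that $G_{\alpha}$ is $\mu$-strongly convex exactly when $G_{\alpha}-\frac{\mu}{2}\|\cdot\|^2$ is convex, i.e. $G_{\alpha}(\lambda\mathbf{x}+(1-\lambda)\mathbf{y})\le \lambda G_{\alpha}(\mathbf{x})+(1-\lambda)G_{\alpha}(\mathbf{y})-\frac{\mu}{2}\lambda(1-\lambda)\|\mathbf{x}-\mathbf{y}\|^2$. Because the embedding $x\mapsto \mathbf{1}\otimes x$ is linear, $\lambda(\mathbf{1}\otimes x)+(1-\lambda)(\mathbf{1}\otimes y)=\mathbf{1}\otimes(\lambda x+(1-\lambda)y)$, so applying this inequality at the two consensus points $\mathbf{1}\otimes x$ and $\mathbf{1}\otimes y$ and substituting the identity from the first step turns it into an inequality purely about $f$. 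The only extra quantity to track is $\|\mathbf{1}\otimes x-\mathbf{1}\otimes y\|^2=\|\mathbf{1}\otimes(x-y)\|^2=m\|x-y\|^2$. Dividing by $\alpha$ then gives
\[
f(\lambda x+(1-\lambda)y)\le \lambda f(x)+(1-\lambda)f(y)-\frac{m\mu}{2\alpha}\lambda(1-\lambda)\|x-y\|^2,
\]
so that $f$ is $\frac{m\mu}{\alpha}$-strongly convex; since $m\ge 1$, this is in particular the claimed $\frac{\mu}{\alpha}$-strong convexity. (The factor $m$ shows the embedding actually yields a slightly stronger constant, which safely absorbs the stated bound.)

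There is essentially no hard step here: the entire content is the observation that consensus vectors lie in the kernel of $I-W\otimes I_n$, so that $G_{\alpha}$ collapses to $\alpha f$ along the diagonal. The one point requiring care is that the class is only $C^1$, so I would phrase strong convexity through convexity of $G_{\alpha}-\frac{\mu}{2}\|\cdot\|^2$ (equivalently the Jensen-type inequality above) rather than through Hessians, which avoids assuming anything beyond the given regularity. If one prefers the first-order monotonicity form $\langle \nabla G_{\alpha}(\mathbf{x})-\nabla G_{\alpha}(\mathbf{y}),\mathbf{x}-\mathbf{y}\rangle\ge \mu\|\mathbf{x}-\mathbf{y}\|^2$, the identical substitution works, using that $\nabla G_{\alpha}(\mathbf{1}\otimes x)=\frac{\alpha}{m}(\nabla f_1(x),\dots,\nabla f_m(x))$ and $\nabla f=\frac{1}{m}\sum_k\nabla f_k$ to reduce the left-hand side to $\alpha\,\langle \nabla f(x)-\nabla f(y),x-y\rangle$.
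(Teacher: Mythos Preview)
Your proof is correct and follows essentially the same route as the paper: restrict $G_{\alpha}$ to the consensus subspace, use that $(I-W\otimes I_n)(\mathbf{1}\otimes x)=0$ so that $G_{\alpha}(\mathbf{1}\otimes x)=\alpha f(x)$, and read off the strong convexity of $f$. The only cosmetic difference is that the paper uses the first-order inequality $G_{\alpha}(\mathbf{y})\ge G_{\alpha}(\mathbf{x})+\langle \nabla G_{\alpha}(\mathbf{x}),\mathbf{y}-\mathbf{x}\rangle+\frac{\mu}{2}\|\mathbf{y}-\mathbf{x}\|^2$ in place of your Jensen form, and in fact you track the factor $m$ from $\|\mathbf{1}\otimes(x-y)\|^2=m\|x-y\|^2$ that the paper silently drops, arriving at the sharper constant $\frac{m\mu}{\alpha}$.
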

\begin{proof}
Let $\mathbf{x}= (x, \cdots, x)$ and $\mathbf{y} = (y, \cdots, y)$. The strongly convexity of $G_{\alpha}$ yields that
\begin{equation}\label{eq-1-5}
G_{\alpha}(\mathbf{y}) \geq G_{\alpha}(\mathbf{x}) + (\mathbf{y}-\mathbf{x}) \nabla G_{\alpha}(x) + \frac{\mu}{2}\|\mathbf{y}-\mathbf{x}\|^2.
\end{equation}
We have
\begin{equation*}
G_{\alpha}(\mathbf{x}) = \frac{\alpha}{m} \sum_{k=1}^m f_k (x), \quad G_{\alpha}(\mathbf{y}) = \frac{\alpha}{m} \sum_{k=1}^{m} f_k (y).
\end{equation*}
Also, 
\begin{equation*}
\begin{split}
\nabla G_{\alpha}(\mathbf{x}) &= \frac{\alpha}{m} (\nabla f_1 (x), \cdots, \nabla f_m (x)) + (I-W) \mathbf{x}
\\
& = \frac{\alpha}{m}   (\nabla f_1 (x), \cdots, \nabla f_m (x)).
\end{split}
\end{equation*}
Using these equalities in \eqref{eq-1-5} we find
\begin{equation*}
f(y) \geq f(x) + (y-x) \nabla f(x) + \frac{\mu}{2\alpha}\|y-x\|^2,
\end{equation*}
and so the function $f$ is $\frac{\mu}{2\alpha}$-strongly convex. The proof is done.
\end{proof}

\

 \section{Uniform bound and smoothness property of $x_*^{\alpha}$ with respect to $\alpha >0$}\label{sec-3}
 


In this section, we exploit the property of the minimizer $\mathbf{x}_{\alpha}$ of the function $G_{\alpha}$. Under the strongly convexity assumption on $G_{\alpha_0}$ for some $\alpha_0$, we will show that the optimizers $\mathbf{x}_*^{\alpha} \in \mathbb{R}^n$ is uniformly bounded for $\alpha \in (0, \alpha_0]$ and also locally Lipschitz continuous with respect to $\alpha$.

\begin{lem}\label{lem-2-1}
We assume that $G_{\alpha}$ is $\mu$-strongly convex for some $\alpha >0$. Then, the function $G_{\beta}$ is $\frac{\beta\mu}{\alpha}$-strongly convex for any $\beta \in (0,\alpha]$.
\end{lem}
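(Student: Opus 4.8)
The plan is to prove Lemma \ref{lem-2-1} by directly decomposing $G_{\beta}$ in terms of $G_{\alpha}$ and exploiting the specific structure of the function. Recall that
\begin{equation*}
G_{\alpha}(\mathbf{x}) = \alpha \mathbf{F}(\mathbf{x}) + \frac{1}{2}\mathbf{x}^T (I-W\otimes 1_n)\mathbf{x}.
\end{equation*}
The key observation is that the quadratic consensus term $\frac{1}{2}\mathbf{x}^T (I-W\otimes 1_n)\mathbf{x}$ does not depend on $\alpha$, while the cost term scales linearly in $\alpha$. Writing $Q(\mathbf{x}) := \frac{1}{2}\mathbf{x}^T(I-W\otimes 1_n)\mathbf{x}$, so that $G_{\alpha} = \alpha\mathbf{F} + Q$, I would express
\begin{equation*}
G_{\beta} = \beta \mathbf{F} + Q = \frac{\beta}{\alpha}\big(\alpha \mathbf{F} + Q\big) + \Big(1 - \frac{\beta}{\alpha}\Big) Q = \frac{\beta}{\alpha} G_{\alpha} + \Big(1-\frac{\beta}{\alpha}\Big) Q.
\end{equation*}
This identity is the heart of the argument.

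Now I would use two facts about the right-hand side. First, since $G_{\alpha}$ is $\mu$-strongly convex by hypothesis and $\beta/\alpha > 0$, the scaled function $\frac{\beta}{\alpha}G_{\alpha}$ is $\frac{\beta\mu}{\alpha}$-strongly convex. Second, because $W$ is doubly stochastic with $W\mathbf{1}=\mathbf{1}$ and its eigenvalues lie in $[-1,1]$, the matrix $I - W$ is positive semidefinite, hence $Q$ is a convex function. The crucial sign point is that the coefficient $1 - \frac{\beta}{\alpha}$ is nonnegative precisely when $\beta \le \alpha$, which is exactly the range $\beta \in (0,\alpha]$ assumed in the statement. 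Therefore $\big(1-\frac{\beta}{\alpha}\big)Q$ is convex, and adding a convex function to a $\frac{\beta\mu}{\alpha}$-strongly convex function preserves strong convexity with the same parameter.

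Combining these, $G_{\beta}$ is $\frac{\beta\mu}{\alpha}$-strongly convex, which is the claim. To make the argument fully rigorous I would phrase strong convexity via the standard second-order or secant characterization: a function $g$ is $\sigma$-strongly convex if and only if $g - \frac{\sigma}{2}\|\cdot\|^2$ is convex. Then $G_{\beta} - \frac{\beta\mu}{2\alpha}\|\cdot\|^2 = \frac{\beta}{\alpha}\big(G_{\alpha} - \frac{\mu}{2}\|\cdot\|^2\big) + \big(1-\frac{\beta}{\alpha}\big)Q$ is a nonnegative combination of two convex functions (the first convex by strong convexity of $G_{\alpha}$, the second convex because $\beta\le\alpha$), hence convex, giving the conclusion cleanly without differentiability assumptions.

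The main obstacle, if any, is purely bookkeeping: one must verify that the coefficient $1-\frac{\beta}{\alpha}$ is indeed nonnegative (which is why the restriction $\beta\le\alpha$ is essential and cannot be dropped) and confirm that $Q$ is genuinely convex. The latter requires only that $\beta \in (0,1)$ is the relevant spectral bound and that $W$ being doubly stochastic forces the spectrum of $W$ into $[-1,1]$, so $I-W \succeq 0$; this is already implicit in the estimates used earlier in the excerpt, such as the bound $y^T(I-W)y \ge (1-\beta)\|y\|^2$ for $y \perp \mathbf{1}_m$. No estimate is actually delicate here, so I expect the proof to be short.
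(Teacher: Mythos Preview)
Your proof is correct and follows essentially the same approach as the paper: the paper writes $G_{\beta}=\frac{\beta}{\alpha}G_{\alpha}+\frac{\alpha-\beta}{2\alpha}\mathbf{x}^T(I-W\otimes 1_n)\mathbf{x}$, observes that the last term is convex for $\beta\le\alpha$, and concludes that $G_{\beta}$ is $\frac{\beta\mu}{\alpha}$-strongly convex. Your additional remarks on the positive semidefiniteness of $I-W$ and the characterization via convexity of $G_{\beta}-\frac{\beta\mu}{2\alpha}\|\cdot\|^2$ only make the same argument more explicit.
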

\begin{proof}For $\beta \in (0,\alpha]$, we express the function $G_{\beta}$ as
\begin{equation*}
\begin{split}
G_{\beta}(\mathbf{x})& = \beta \sum_{k=1}^n f_k (x_k) + \frac{1}{2} \mathbf{x}^T (I-W\otimes 1_n) \mathbf{x}
\\
& = \frac{\beta}{\alpha} \Big[ \alpha \sum_{k=1}^n f_k (x_k) + \frac{1}{2} \mathbf{x}^T (I-W\otimes 1_n) \mathbf{x}\Big] + \frac{(\alpha -\beta)}{2\alpha} \mathbf{x}^T (I-W \otimes 1_n) \mathbf{x}.
\end{split}
\end{equation*}
Since the last term is convex, and $G_{\alpha}$ is $\mu$-strongly convex, the above formula yields that $G_{\beta}$ is $\frac{\beta\mu}{\alpha}$-strongly convex. The proof is done.
\end{proof}

We now prove Theorem \ref{lem-1-2}.
\begin{proof}[Proof of Theorem \ref{lem-1-2}] For $0 < \alpha \leq \alpha_0$ we know that $G_{\alpha}$ is $\Big( \frac{\alpha}{\alpha_0}\Big)\mu$-strongly convex by Lemma \ref{lem-2-1}. Therefore
\begin{equation*}
G_{\alpha}(\mathbf{x}) \geq \frac{\alpha \mu}{2\alpha_0} \|\mathbf{x}-\mathbf{x}_*^{\alpha}\|^2
\end{equation*} 
for all $\mathbf{x} \in \mathbb{R}^{nm}$.
Taking $\mathbf{x}=0$ here, we get
\begin{equation*}
\frac{\alpha \mu}{2\alpha_0}\|\mathbf{x}_*^{\alpha}\|^2 \leq G_{\alpha}(0) = \alpha f (0),
\end{equation*}
which gives
\begin{equation*}
\|\mathbf{x}_*^{\alpha}\|^2 \leq \frac{2\alpha_0}{\mu}f (0).
\end{equation*}
This proves the first assertion. 

Next we are concerned with the smoothness property of $\mathbf{x}_*^{\alpha} \in \mathbb{R}^d$ with respect to the parameter $\alpha>0$. The function $G_{\beta}$ is $\frac{\mu\beta}{\alpha_0}$-strongly convex by Lemma \ref{lem-2-1}. Using this fact and that $ \mathbf{x}_*^{\beta}$ is a minimizer of $F_{\beta}$, we find  
\begin{equation*}
\begin{split}
&\beta \mathbf{F}(\mathbf{x}) + \frac{1}{2} \mathbf{x}^T (I-W) \mathbf{x} 
\\
&  \geq \frac{\mu \beta}{2\alpha_0}\|\mathbf{x}- \mathbf{x}_*^{\beta}\|^2 + \beta \mathbf{F}( \mathbf{x}_*^{\beta}) + \frac{1}{2} ( \mathbf{x}_*^{\beta})^T (I-W)  \mathbf{x}_*^{\beta}
\\
& = \frac{\mu \beta}{2\alpha_0}\|\mathbf{x}- \mathbf{x}_*^{\beta}\|^2 + (\beta-\alpha) \mathbf{F}( \mathbf{x}_*^{\beta})  +\Big[ \alpha \mathbf{F}( \mathbf{x}_*^{\beta}) + \frac{1}{2}( \mathbf{x}_*^{\beta})^T (I-W)  \mathbf{x}_*^{\beta}\Big].
\end{split}
\end{equation*}
Using the minimality of $\mathbf{x}_*^{\alpha}$ for $G_{\alpha}$ in the right hand side, we find
\begin{equation*}
\begin{split}
&\beta \mathbf{F}(\mathbf{x}) + \frac{1}{2} \mathbf{x}^T (I-W) \mathbf{x}   
\\
& \geq \frac{\mu \beta}{2\alpha_0}\|\mathbf{x}- \mathbf{x}_*^{\beta}\|^2 + (\beta- \alpha) \mathbf{F}( \mathbf{x}_*^{\beta}) +\Big[\alpha \mathbf{F}(\mathbf{x}_*^{\alpha}) + \frac{1}{2} ({\mathbf{x}_*^{\alpha}})^T (I-W) \mathbf{x}_*^{\alpha}\Big].
\end{split}
\end{equation*}
Taking $\mathbf{x}=\mathbf{x}_*^{\alpha}$, we find
\begin{equation*}  
\begin{split}
&\beta \mathbf{F}(\mathbf{x}_*^{\alpha}) + \frac{1}{2}(\mathbf{x}_*^{\alpha})^T (I-W) \mathbf{x}_*^{\alpha}
\\
& \geq \frac{\mu \beta}{2\alpha_0}\|\mathbf{x}_*^{\alpha}- \mathbf{x}_*^{\beta}\|^2 + (\beta - \alpha) \mathbf{F}( \mathbf{x}_*^{\beta}) + \alpha \mathbf{F}(\mathbf{x}_*^{\alpha}) + \frac{1}{2}(\mathbf{x}_*^{\alpha})^T (I-W) \mathbf{x}_*^{\alpha},
\end{split}
\end{equation*}
which gives
\begin{equation}\label{eq-2-1}
(\beta-\alpha) (\mathbf{F}(\mathbf{x}_*^{\alpha}) -\mathbf{F}( \mathbf{x}_*^{\beta})) \geq \frac{\mu \beta}{2\alpha_0}\|\mathbf{x}_*^{\alpha} - \mathbf{x}_*^{\beta}\|^2.
\end{equation}
Notice that
\begin{equation*}
\mathbf{F}(\mathbf{x}_*^{\alpha}) - \mathbf{F}( \mathbf{x}_*^{\beta}) =(\mathbf{x}_*^{\alpha}- \mathbf{x}_*^{\beta}) \cdot \int_0^1 \nabla \mathbf{F}( \mathbf{x}_*^{\beta} + s(\mathbf{x}_*^{\alpha}- \mathbf{x}_*^{\beta})) ds.
\end{equation*}
Therefore we have
\begin{equation*}
|\mathbf{F}(\mathbf{x}_*^{\alpha}) - \mathbf{F}( \mathbf{x}_*^{\beta}) | \leq C_1 \|\mathbf{x}_*^{\alpha} - \mathbf{x}_*^{\beta}\|.
\end{equation*}
This together with \eqref{eq-2-1} gives
\begin{equation*}
\frac{\mu \beta}{2\alpha_0}\|\mathbf{x}_*^{\alpha}- \mathbf{x}_*^{\beta}\|^2 \leq C_1 |\beta-\alpha| \|\mathbf{x}_*^{\alpha} - \mathbf{x}_*^{\beta}\|.
\end{equation*}
The proof is done.
\end{proof}

\section{Boundedness  property}\label{sec-4}
In this section, we make use of the properties of $\mathbf{x}_*^{\alpha}$ obtained in the previous section to study the sequence $\{\mathbf{x} (t)\}_{t \geq 0}$ of the decentralized gradient descent \eqref{eq-1-4}.
\begin{lem}\label{lem-3-1} Suppose that $G_{\alpha_0}$ is $\mu$-strongly convex for some $\alpha_0 >0$.
Assume that $\alpha (t) \leq\min\Big\{ \frac{1+\sigma_n (W)}{L},~\alpha_0\Big\}$. Then we have
\begin{equation*}
\|\mathbf{x}_{t+1} -\mathbf{x}_*^{\alpha (t+1)} \| \leq \|\mathbf{x}_t -\mathbf{x}_*^{\alpha (t)}\| + \frac{2\alpha_0 C_1}{\mu\alpha (t)} |\alpha (t+1) -\alpha (t)|.
\end{equation*}
\end{lem}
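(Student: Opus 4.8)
The plan is to compare the new iterate $\mathbf{x}_{t+1}$ with the two nearby minimizers $\mathbf{x}_*^{\alpha(t)}$ and $\mathbf{x}_*^{\alpha(t+1)}$, and to split the error by the triangle inequality
\begin{equation*}
\|\mathbf{x}_{t+1} - \mathbf{x}_*^{\alpha(t+1)}\| \le \|\mathbf{x}_{t+1} - \mathbf{x}_*^{\alpha(t)}\| + \|\mathbf{x}_*^{\alpha(t)} - \mathbf{x}_*^{\alpha(t+1)}\|.
\end{equation*}
The second summand is handled immediately by the continuity estimate already available: since $\{\alpha(t)\}$ is non-increasing with $\alpha(0)\le\alpha_0$, both $\alpha(t)$ and $\alpha(t+1)$ lie in $(0,\alpha_0]$, and applying Theorem \ref{lem-1-2}(2) with $\beta=\alpha(t)$ and $\alpha=\alpha(t+1)$ gives exactly $\|\mathbf{x}_*^{\alpha(t)} - \mathbf{x}_*^{\alpha(t+1)}\| \le \frac{2\alpha_0 C_1}{\mu\alpha(t)}|\alpha(t+1)-\alpha(t)|$, which is the second term in the claim. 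Hence the entire substance of the lemma reduces to showing the one-step contraction $\|\mathbf{x}_{t+1} - \mathbf{x}_*^{\alpha(t)}\| \le \|\mathbf{x}_t - \mathbf{x}_*^{\alpha(t)}\|$.

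To establish this, I would recall from \eqref{eq-1-4} that $\mathbf{x}_{t+1} = \mathbf{x}_t - \nabla G_{\alpha(t)}(\mathbf{x}_t)$, so that $\mathbf{x}_{t+1}$ is precisely one unit-stepsize gradient-descent step for the \emph{fixed} objective $G_{\alpha(t)}$. Because $\mathbf{x}_*^{\alpha(t)}$ minimizes $G_{\alpha(t)}$, it satisfies $\nabla G_{\alpha(t)}(\mathbf{x}_*^{\alpha(t)})=0$, so it is a fixed point of the same map. Subtracting and writing the gradient difference as an integral of the Hessian along the connecting segment, I obtain
\begin{equation*}
\mathbf{x}_{t+1} - \mathbf{x}_*^{\alpha(t)} = (I - H_t)\,(\mathbf{x}_t - \mathbf{x}_*^{\alpha(t)}), \qquad H_t = \int_0^1 \nabla^2 G_{\alpha(t)}\big(\mathbf{x}_*^{\alpha(t)} + s(\mathbf{x}_t - \mathbf{x}_*^{\alpha(t)})\big)\,ds.
\end{equation*}
It then suffices to prove the operator-norm bound $\|I-H_t\|\le 1$, and this follows as soon as $0 \preceq H_t \preceq 2I$.

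The spectral control of $H_t$ is where I expect the real work to lie. Differentiating twice gives $\nabla^2 G_{\alpha(t)}(\mathbf{y}) = \alpha(t)\nabla^2 \mathbf{F}(\mathbf{y}) + (I - W\otimes 1_n)$. The lower bound is exactly where the strong-convexity hypothesis is indispensable: by Lemma \ref{lem-2-1}, $G_{\alpha(t)}$ is $\frac{\alpha(t)\mu}{\alpha_0}$-strongly convex, so $\nabla^2 G_{\alpha(t)}(\mathbf{y}) \succeq \frac{\alpha(t)\mu}{\alpha_0} I \succ 0$ pointwise, and the same holds for the average $H_t$ — crucially this remains valid even though the individual $f_k$ may fail to be convex. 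For the upper bound, $L$-smoothness of each $f_k$ yields $\nabla^2 \mathbf{F}(\mathbf{y}) \preceq L I$, while the largest eigenvalue of $I - W\otimes 1_n$ equals $1-\lambda_m(W)$, where $\lambda_m(W)$ is the smallest eigenvalue of $W$. Therefore $\nabla^2 G_{\alpha(t)}(\mathbf{y}) \preceq \big(\alpha(t)L + 1 - \lambda_m(W)\big) I$, and the stepsize restriction $\alpha(t)\le \frac{1+\lambda_m(W)}{L}$ forces $\alpha(t)L + 1 - \lambda_m(W) \le 2$, i.e. $H_t \preceq 2I$.

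Combining the two spectral bounds gives eigenvalues of $I-H_t$ in $[-1,1]$, hence $\|I-H_t\|\le 1$ and the desired non-expansiveness $\|\mathbf{x}_{t+1} - \mathbf{x}_*^{\alpha(t)}\| \le \|\mathbf{x}_t - \mathbf{x}_*^{\alpha(t)}\|$; feeding this together with the Theorem \ref{lem-1-2}(2) bound into the triangle inequality completes the argument. The main obstacle is the upper Hessian bound: it is precisely the competition between the smoothness scale $\alpha(t)L$ and the consensus gap $1-\lambda_m(W)$ that pins down the sharp threshold $\frac{1+\lambda_m(W)}{L}$, and the interplay with the strong convexity of $G_{\alpha(t)}$ (rather than of the individual costs) is what makes the contraction survive non-convex local objectives.
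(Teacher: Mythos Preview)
Your proposal is correct and follows the same overall architecture as the paper's proof: split via the triangle inequality, invoke Theorem~\ref{lem-1-2}(2) for the drift of the minimizers, and show the one-step non-expansiveness $\|\mathbf{x}_{t+1}-\mathbf{x}_*^{\alpha(t)}\|\le\|\mathbf{x}_t-\mathbf{x}_*^{\alpha(t)}\|$ by controlling the smoothness constant of $G_{\alpha(t)}$ through the bound $\alpha(t)L+1-\lambda_m(W)\le 2$.

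The only substantive difference is in how the non-expansiveness is established. The paper argues at first order: since $G_{\alpha(t)}$ is convex (in fact strongly convex by Lemma~\ref{lem-2-1}) and $L_G$-smooth with $L_G\le 2$, co-coercivity gives
\[
\big\langle \mathbf{x}_t-\mathbf{x}_*^{\alpha(t)},\,\nabla G_{\alpha(t)}(\mathbf{x}_t)\big\rangle \ge \tfrac{1}{L_G}\|\nabla G_{\alpha(t)}(\mathbf{x}_t)\|^2,
\]
and expanding $\|\mathbf{x}_{t+1}-\mathbf{x}_*^{\alpha(t)}\|^2$ yields the inequality directly. Your argument instead writes the gradient step via an integrated Hessian $H_t$ and bounds its spectrum in $[0,2]$. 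This is equally clean, but tacitly requires each $f_k$ to be $C^2$, which is not strictly implied by $L$-smoothness alone. If you want to avoid that extra regularity assumption, swap the Hessian step for the co-coercivity argument above; otherwise your proof goes through and matches the paper's conclusion exactly.
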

\begin{proof}
Note that $G_{\alpha}$ is $L_G$-smooth with $L_G = \alpha L + (1-\sigma_n (W))$. Thus, if $\alpha \leq \frac{1+\sigma_n (W)}{L}$, then we have $L_G \leq 2$. Note that 
\begin{equation*}
\begin{split}
&\|\mathbf{x}_{t+1}-\mathbf{x}_*^{\alpha}\|^2 
\\
&= \|\mathbf{x}_t- \nabla G_{\alpha}(\mathbf{x}_t) -\mathbf{x}_*^{\alpha}\|^2
\\
& = \|\mathbf{x}_t -\mathbf{x}_*^{\alpha}\|^2 - 2 \Big\langle \mathbf{x}_t -\mathbf{x}_*^{\alpha},~ \nabla G_{\alpha}(\mathbf{x}_t) -\nabla G_{\alpha}(\mathbf{x}_*^{\alpha}) \Big\rangle +  \|\nabla G_{\alpha}(\mathbf{x}_t) - \nabla G_{\alpha}(\mathbf{x}_*^{\alpha})\|^2.
\end{split}
\end{equation*}
Since $G_{\alpha}$ is convex and $L_G$-smooth, we have
\begin{equation*}
\langle \mathbf{x}_t -\mathbf{x}_*^{\alpha},~\nabla G_{\alpha}(\mathbf{x}_t) - \nabla G_{\alpha}(\mathbf{x}_*^{\alpha}) \rangle \geq \frac{1}{L_G} \|\nabla G_{\alpha}(\mathbf{x}_{t}) - \nabla G_{\alpha}(\mathbf{x}_*^{\alpha})\|^2.
\end{equation*}
Combining the above two estimates, we get
\begin{equation*}
\begin{split}
\|\mathbf{x}_{t+1} -\mathbf{x}_*^{\alpha}\|^2 & \leq \|\mathbf{x}_t -\mathbf{x}_*^{\alpha}\|^2 - \Big( \frac{2}{L_G} -1 \Big) \|\nabla G_{\alpha}(\mathbf{x}_t) - \nabla G_{\alpha}(\mathbf{x}_*^{\alpha})\|^2
\\
&\leq \|\mathbf{x}_t -\mathbf{x}_*^{\alpha}\|^2.
\end{split}
\end{equation*}
Using this with the triangle inequality and Theorem \ref{lem-1-2}, we deduce
\begin{equation*}
\begin{split}
\|\mathbf{x}_{t+1} -\mathbf{x}_*^{\alpha (t+1)}\| & \leq \|\mathbf{x}_{t+1} -\mathbf{x}_*^{\alpha (t)}\| + \|\mathbf{x}_*^{\alpha (t)} - \mathbf{x}_*^{\alpha (t+1)}\|
\\
& \leq \|\mathbf{x}_{t+1} -\mathbf{x}_*^{\alpha (t)}\| +  \frac{2\alpha_0 C_1}{\mu\alpha (t)}|\alpha (t) -\alpha (t+1)|.
\end{split}
\end{equation*}
The proof is done.

\end{proof}

In order to prove Theorem \ref{thm-3-3}, we recall the following result from \cite{CK}.
\begin{thm}[\cite{CK}]\label{thm-3-2} 
Suppose that  the total cost function $f$ is $\mu$-strongly convex for some $\mu >0$ and each $f_i$ is $L$-smooth for $1 \leq i \leq m$. If $\{\alpha(t)\}_{t\in\mathbb{N}_0}$ is non-increasing stepsize satisfying $\alpha (0) < \frac{\eta (1-\beta)}{L ( \eta + L )}$, then we have
\begin{equation}\label{eq-2-30}
\|\bar{\mathbf{x}}(t)-\mathbf{x}_* \|  \leq R \quad and \quad \|\mathbf{x}(t)-\bar{\mathbf{x}}(t) \| \leq \frac{\eta R}{L}<R, \quad \forall t \geq 0.
\end{equation}
Here $\eta = \frac{\mu L}{\mu +L}$ and 
 a finite value $R>0$ is defined as
\begin{equation*}
R = \max\left\{ \|\bar{\mathbf{x}}(0) - \mathbf{x}_*\|,~\frac{L}{\eta}\|\mathbf{x}(0)- \bar{\mathbf{x}}(0)\|,~\frac{\sqrt{m} D\alpha(0)}{\eta (1-\beta)/L  - (\eta +L) \alpha (0)}\right\}.
\end{equation*} 
\end{thm}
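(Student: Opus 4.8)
The plan is to prove the two estimates in \eqref{eq-2-30} simultaneously by induction on $t$, tracking the scalar quantities $A(t):=\|\bar{\mathbf{x}}(t)-\mathbf{x}_*\|$ and $B(t):=\|\mathbf{x}(t)-\bar{\mathbf{x}}(t)\|$, where $\bar{\mathbf{x}}(t)=\mathbf{1}\otimes\bar{x}(t)$ with $\bar{x}(t)=\frac{1}{m}\sum_{k=1}^m x_k(t)$ and $\mathbf{x}_*=\mathbf{1}\otimes x_*$. The inductive claim is that $A(t)\le R$ and $B(t)\le \frac{\eta R}{L}$ for all $t$; the case $t=0$ is immediate from the definition of $R$, since its first two entries give $A(0)\le R$ and $\frac{L}{\eta}B(0)\le R$. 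The whole argument then reduces to deriving two coupled one-step recursions and checking that the region $\{A\le R,\ B\le \eta R/L\}$ is forward invariant. Note that $\frac{\eta R}{L}<R$ is automatic because $\eta=\frac{\mu L}{\mu+L}<L$.

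For the averaged component, I would first average \eqref{eq-1-1} over the agents; by column-stochasticity of $W$ the mixing term drops out, giving $\bar{x}(t+1)=\bar{x}(t)-\frac{\alpha(t)}{m}\sum_{k}\nabla f_k(x_k(t))$. Writing $\frac{1}{m}\sum_k\nabla f_k(x_k(t))=\nabla f(\bar{x}(t))+\frac{1}{m}\sum_k[\nabla f_k(x_k(t))-\nabla f_k(\bar{x}(t))]$ exhibits this as a gradient step for the aggregate $f$ at $\bar{x}(t)$ plus an error bounded by $L$-smoothness. Applying the contraction from Theorem \ref{thm-1-0} to the $\mu$-strongly convex, $L$-smooth function $f$ (the hypothesis $\alpha(0)<\frac{\eta(1-\beta)}{L(\eta+L)}$ lies below $\frac{2}{\mu+L}$, so the theorem applies and yields factor $1-\eta\alpha(t)$), controlling the error term by $\frac{L\alpha(t)}{\sqrt m}B(t)$, and rescaling by $\sqrt m$, gives
\[
A(t+1)\le (1-\eta\alpha(t))A(t)+L\alpha(t)B(t).
\]
Inserting $A(t)\le R$ and $B(t)\le\frac{\eta R}{L}$, the two terms $\eta\alpha(t)R$ cancel exactly and $A(t+1)\le R$.

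For the consensus component, I would apply the projection $(I-J)\otimes I_n$, with $J=\frac{1}{m}\mathbf{1}\mathbf{1}^T$, to the stacked form $\mathbf{x}(t+1)=(W\otimes I_n)\mathbf{x}(t)-\alpha(t)\mathbf{g}(t)$, where $\mathbf{g}(t)=(\nabla f_1(x_1(t)),\dots,\nabla f_m(x_m(t)))$. Since $WJ=JW=J$, the mixing part contracts on the disagreement subspace, $\|((W-J)\otimes I_n)(\mathbf{x}(t)-\bar{\mathbf{x}}(t))\|\le\beta B(t)$ with $\beta$ the second largest eigenvalue of $W$, while the projection is nonexpansive on the gradient term. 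Bounding $\|\mathbf{g}(t)\|\le L\|\mathbf{x}(t)-\mathbf{x}_*\|+\sqrt m D\le L(A(t)+B(t))+\sqrt m D$, where $D=\max_k\|\nabla f_k(x_*)\|$, produces
\[
B(t+1)\le (\beta+L\alpha(t))B(t)+L\alpha(t)A(t)+\sqrt m D\,\alpha(t).
\]
Substituting the inductive bounds, the requirement $B(t+1)\le\frac{\eta R}{L}$ rearranges to $\big(\frac{\eta(1-\beta)}{L}-(\eta+L)\alpha(t)\big)R\ge \sqrt m D\,\alpha(t)$. Because $\alpha\mapsto \sqrt m D\,\alpha/\big(\frac{\eta(1-\beta)}{L}-(\eta+L)\alpha\big)$ is increasing and $\alpha(t)\le\alpha(0)$ by the non-increasing assumption, this is guaranteed precisely by the third entry of $R$, the hypothesis on $\alpha(0)$ keeping the denominator positive.

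I expect the main obstacle to be the two-way coupling: neither recursion closes in isolation. The decisive design choice is the ratio $\eta/L$ between the two target bounds, tuned so that the consensus error enters the averaged equation with coefficient $L\alpha(t)$ and exactly offsets the strong-convexity contraction gain $\eta\alpha(t)$, and simultaneously so that the admissible stepsize range is exactly the positivity threshold of the denominator $\frac{\eta(1-\beta)}{L}-(\eta+L)\alpha(0)$. Two secondary care points are verifying that $\alpha(0)<\frac{\eta(1-\beta)}{L(\eta+L)}$ stays below $\frac{2}{\mu+L}$ so that Theorem \ref{thm-1-0} is applicable, and using the monotonicity of $\alpha(t)$ to replace $\alpha(t)$ by $\alpha(0)$ when bounding the forcing term of the $B$-recursion.
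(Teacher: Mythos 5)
The paper does not actually prove Theorem \ref{thm-3-2} --- it imports the result verbatim from \cite{CK} --- but your blind reconstruction is correct and is essentially the argument of \cite{CK}: a coupled forward-invariance induction on $A(t)=\|\bar{\mathbf{x}}(t)-\mathbf{x}_*\|$ and $B(t)=\|\mathbf{x}(t)-\bar{\mathbf{x}}(t)\|$, with the recursions $A(t+1)\le(1-\eta\alpha(t))A(t)+L\alpha(t)B(t)$ and $B(t+1)\le(\beta+L\alpha(t))B(t)+L\alpha(t)A(t)+\sqrt{m}D\alpha(t)$ closing exactly because the ratio $\eta/L$ makes the consensus term cancel the contraction gain, and because the third entry of $R$ is precisely the fixed point of the $B$-constraint at $\alpha(0)$. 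All the checks you flag go through (in particular $\alpha(0)<\frac{\eta(1-\beta)}{L(\eta+L)}=\frac{\mu(1-\beta)}{L(2\mu+L)}\le\frac{2}{\mu+L}$, so the contraction factor $1-\eta\alpha(t)$ from the strongly convex gradient step is available, and monotonicity of $\alpha(t)$ handles the forcing term), so there is nothing missing.
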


 Using the above lemma, we give the proof of the main theorem on the boundedness  property of the sequence $\{\mathbf{x}(t)\}_{t \geq 0}$.

\begin{proof}[Proof of Theorem \ref{thm-3-3}] Scine $F \in \mathbf{G}_{\alpha}$, the function $G_{\alpha}$ is $\mu$-strongly convex for some $\mu >0$. Then we know that $f$ is $\frac{\mu}{\alpha_0}$-strongly convex by Theorem \ref{thm-2-2}. We set the following constants
\begin{equation*}
\eta = \frac{(\mu/\alpha_0) L}{(\mu/\alpha_0) +L}\quad \textrm{and}\quad \mathbf{a}_c = \frac{\frac{\mu}{\alpha_0} (1-\beta)}{ 2L\Big( \frac{\mu}{\alpha_0} + L\Big)} = \frac{\mu (1-\beta)}{4L (\mu + \alpha_0 L)}.
\end{equation*}
Take a number $t_0 \in \mathbb{N}$ such that $\alpha (t_0) \geq \mathbf{a}_c$ and $\alpha (t_0 +1) < \mathbf{a}_c$. 

For $t \leq t_0$ we apply Lemma \ref{lem-3-1} to find
\begin{equation*}
\begin{split}
\|\mathbf{x}_{t+1} -\mathbf{x}_*^{\alpha (t+1)}\| &\leq \|\mathbf{x}_t -\mathbf{x}_*^{\alpha (t)}\| +\frac{2C_1}{\alpha (t)}|\alpha (t+1) -\alpha (t)|
\\
&\leq \|\mathbf{x}_t -\mathbf{x}_*^{\alpha (t)}\| +\frac{2C_1}{\mathbf{a}_c}|\alpha (t+1) -\alpha (t)|.
\end{split}
\end{equation*}
Thus, 
\begin{equation*}
\begin{split}
\|\mathbf{x}_{t_0+1} -\mathbf{x}_*^{\alpha (t)}\| & \leq \|\mathbf{x}_0 -\mathbf{x}_{*}^{\alpha (1)}\| +\frac{2C_1}{\mathbf{a}_c} \sum_{s=0}^{t_0} |\alpha (s+1) -\alpha (s)|
\\
& = \|\mathbf{x}_0 -\mathbf{x}_*^{\alpha (0)}\| + \frac{2C_1}{\mathbf{a}_c} (\alpha (0) - \alpha (t_0+1)).
\end{split}
\end{equation*}
Note that
\begin{equation*}
\|\mathbf{x}_{t_0 +1}\| \leq \|\mathbf{x}_0 -\mathbf{x}_*^{\alpha (0)}\| + \frac{2C_1}{\mathbf{a}_c}\alpha (0) + \|\mathbf{x}_*^{\alpha (t_0 +1)}\|.
\end{equation*}
Therefore 
\begin{equation*}
\|\bar{\mathbf{x}}_{t_0 +1} - \mathbf{x}_{t_0+1}\| \leq \|\mathbf{x}_0 -\mathbf{x}_*^{\alpha (0)}\| + \frac{2C_1}{\mathbf{a}_c}\alpha (0) + \|\mathbf{x}_*^{\alpha (t_0 +1)}\|
\end{equation*}
and
\begin{equation*}
\begin{split}
\|\bar{\mathbf{x}}_{t_0 +1} -\mathbf{x}_*\| & \leq \|\bar{\mathbf{x}}_{t_0 +1}\| + \|\mathbf{x}_*\|
\\
&\leq \|\mathbf{x}_0 -\mathbf{x}_*^{\alpha (0)}\| +\frac{2C_1}{\mathbf{a}_c}\alpha (0) + \|\mathbf{x}_*^{\alpha (t_0 +1)}\| +\|\mathbf{x}_*\|.
\end{split}
\end{equation*}
Now we use the result of Theorem \ref{thm-3-2} to deduce that
\begin{equation*}
R = \max\left\{ \|\bar{\mathbf{x}}_{t_0 +1} - \mathbf{x}_*\|,~\frac{L}{\eta}\|\mathbf{x}_{t_0+1}- \bar{\mathbf{x}}_{t_0+1}\|,~\frac{\sqrt{m} D\alpha(t_0+1)}{\eta (1-\beta)/L  - (\eta +L) \alpha (t_0 +1)}\right\},
\end{equation*} 
then we have
\begin{equation}\label{eq-2-30}
\|\bar{\mathbf{x}}(t)-\mathbf{x}_* \|  \leq R \quad and \quad \|\mathbf{x}(t)-\bar{\mathbf{x}}(t) \| \leq \frac{\eta R}{L}<R, \quad \forall t \geq t_0 +1.
\end{equation}
Here $\eta = \frac{\mu L}{\mu +L}$. We also check that
\begin{equation*}
\begin{split}
\frac{\sqrt{m} D\alpha(t_0+1)}{\eta (1-\beta)/L  - (\eta +L) \alpha (t_0 +1)}& = \frac{\sqrt{m}D}{\frac{\eta (1-\beta)}{L \alpha (t_0+1)} - (\eta +L)}
\\
& \leq \frac{\sqrt{m}D}{\frac{\eta (1-\beta)}{L\mathbf{a}_c} - (\eta+L)} = \frac{\sqrt{m}D}{2 (\eta +L)}.
\end{split}
\end{equation*}
The proof is finished.
\end{proof}

\section{Numerical experiments}\label{sec-5}

In this section, we provide numerical experiments supporting the theoretical results of this paper. 

First we set $m=3$ and $n=2$ for problem \eqref{prob}. For each $1 \leq k \leq m$, we choose the local cost $f_k$ as
\begin{equation*}
f_k (x) = \frac{1}{2} x^T A_k x + B_k^T x.
\end{equation*}
Here the matrix $A_k$ is an $n\times n$ symmetrix matrix chosen as
\begin{equation*}
A_k = \ep I_{n\times n} + (R_k +R_k^T),
\end{equation*}
where $\ep >0$ and each element of $R_k$ is chosen randomly from $[-1,1]$ with uniform distribution. Also, each element of  $B_k \in \mathbb{R}^n$ is randomly chosen in $[-1,1]$ with uniform distribution. We choose $W$ as
\begin{equation*}
W = \begin{pmatrix} 1/2 & 1/4& 1/4 \\ 1/4& 1/2 & 1/4 \\ 1/4& 1/4 &1/2\end{pmatrix}.
\end{equation*}
In order to verify the result of Theorem \ref{thm-3-3}, we compute the following constants:
\begin{equation*}
\begin{split}
\alpha_A& = \textrm{a possibly large value $\alpha_0>0$ such that $F \in \mathbf{G}_{\alpha_0}$}
\\
\alpha_L & = \frac{1+\lambda_m (W)}{L}.
\end{split}
\end{equation*}
The detail for computing the above values are explained below:
\begin{itemize}
\item To compute $\alpha_A$ we choose a large $N \in \mathbb{N}$ and set
\begin{equation*}
\alpha_A = \sup_{k \in \mathbb{N}} \Big\{ \frac{k}{N}>0~|~ F \in \mathbf{G}_{(k/N)}\Big\}.
\end{equation*}
Since $G_{(k/N)} (x)$ is a quadratic function, we may check the positivity of all eigenvalues of $\nabla^2 G_{(k/N)}(x)$ to determine if the function $G_{(k/N)}$ is strongly convex. 
\item To find the smallest value of $L>0$, we compute $L_k = \|A_k\|_{\infty}$ by using the eigenvalues of $A_k$. Then we set $L = \sup_{1 \leq k \leq m} L_k$.
\end{itemize}
In our experiment, the constants are computed as
\begin{equation*}
\alpha_A \simeq 0.0799\quad \textrm{and}\quad 
\alpha_L  \simeq 0.1721
\end{equation*}
with $L\simeq 7.2615 $ and $\lambda_m (W) = 0.25$.

Since $\alpha_A < \alpha_L$, the range \eqref{eq-1-61} of the stepsize $\alpha(t)$ guaranteed by Theorem \ref{thm-3-3} is given as
\begin{equation*}
\alpha (0) \leq \alpha_A.
\end{equation*} 
We take the constant stepsize $\alpha (t) \equiv \alpha >0$ with various choices of $\alpha$ given as
\begin{equation*}
\{0.5 \alpha_A, ~0.95 \alpha_A,~ 0.99 \alpha_A,~1.01 \alpha_A, ~1.01 \}. 
\end{equation*}
For each time step $t \geq 0$, we measure the following error
\begin{equation*}
R(t) = \sum_{k=1}^m \|x_k (t) -x_*\|,
\end{equation*}
where $x_k (t)$ is the state of $k$ in \eqref{eq-1-1} and $x_*$ is the optimizer of \eqref{prob}.
\begin{figure}[!htbp]
\centering
\includegraphics[width=7.4cm]{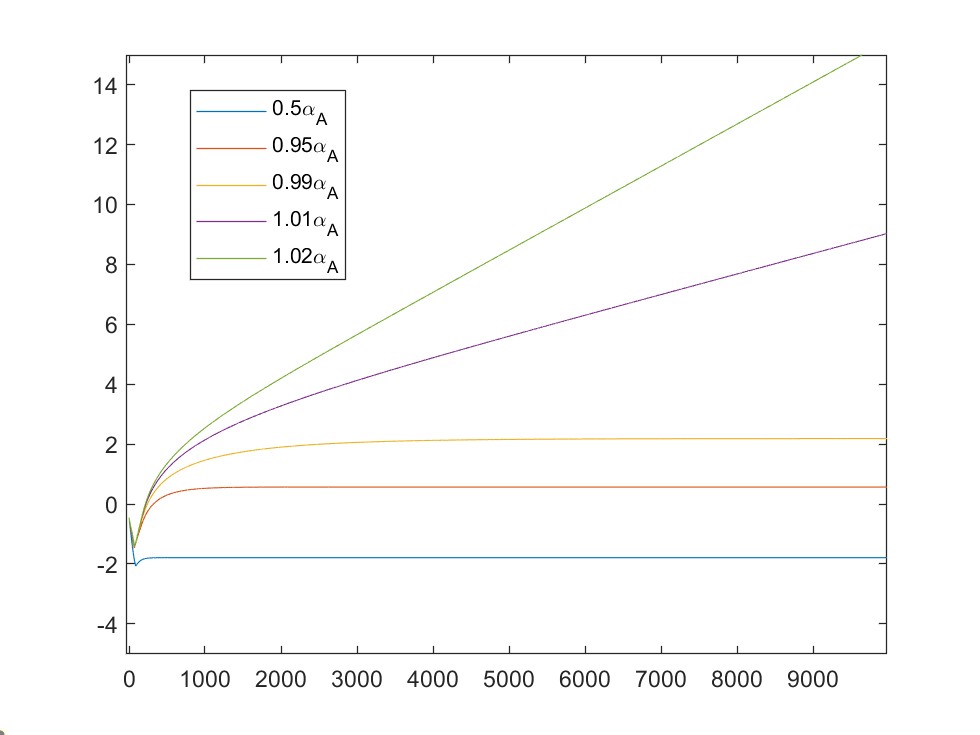}
\includegraphics[width=7.4cm]{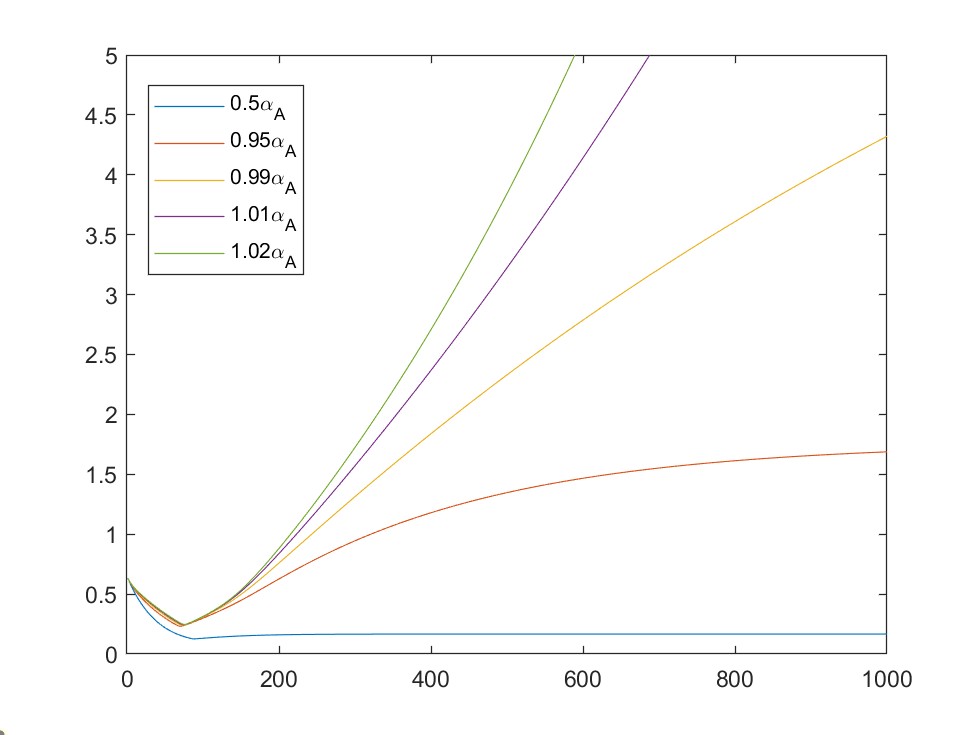}
\caption{The left (resp., right) figure corresponds to the graph of value $\log R(t)$ (resp.,  $R(t)$) with respect to $t \geq 0$.}
\label{figure1}
\end{figure}
The result shows that the states $\{x_k (t)\}_{k=1}^m$ are uniformly bounded for the three cases $\alpha \in \{0.5 \alpha_A, 0.95 \alpha_A, 0.99 \alpha_A\}$ as expected by Theorem \ref{thm-3-3}. Meanwhile, the states $\{x_k (t)\}$ diverges as $t \rightarrow \infty$ for the choices $\alpha \in \{1.01 \alpha_A, 1.02\alpha_A\}$ which are larger than the value $\alpha_A$. This shows the sharpness of the result of Theorem \ref{thm-3-3}.

\end{document}